\theoremstyle{plain}
\newtheorem{lemma}{Lemma}[section]           
\newtheorem{theorem}[lemma]{Theorem}         
\newtheorem{conjecture}[lemma]{Conjecture}   
\newtheorem{question}[lemma]{Question}       
\theoremstyle{definition}
\newcommand{\ggcqedsymbol}{$\square$}
\newcommand{\ggcqed}{\hbox{}\nobreak\hbox{\quad\ggcqedsymbol}}
\newcommand{\ggcendpf}{\ggcqed}
\newcommand{\ggcnopf}{\ggcqed}
\newcommand{\defterm}[1]{\emph{#1}} 
\newcommand{\abstdefterm}[1]{#1} 
\newcommand{\mr}[1]{\href{https://mathscinet.ams.org/mathscinet-getitem?mr=MR#1}{MR#1}.}
\date{May 19, 2022}
\title{List Multicoloring of Planar Graphs and Related Classes}
\author{Glenn G.~Chappell\\
\small Department of Computer Science\\
\small University of Alaska\\
\small Fairbanks, AK 99775-6670\\
\small\texttt{ggchappell{@}alaska.edu}}
\begin{document}

\maketitle
\centerline{\small \textit{2020 Mathematics Subject Classification.}
 Primary 05C15; Secondary 05C10, 05C83.}
\centerline{\small \textit{Key words and phrases.}
  List coloring, choosability, planar graph, graph minor.}

\begin{abstract}
For positive integers $a$ and $b$,
a graph $G$ is \abstdefterm{$(a:b)$-choosable} if,
for each assignment of lists of $a$ colors
to the vertices of $G,$
each vertex can be colored with a set of $b$
colors from its list
so that adjacent vertices are colored with disjoint sets.

We show that
for positive integers $a$ and $b$,
every bipartite planar graph is $(a:b)$-choosable
iff $\frac{a}{b} \ge 3$.
For general planar graphs,
we show that if $\frac{a}{b} < 4\frac{2}{5}$,
then there exists a planar graph
that is not $(a:b)$-choosable,
thus improving on a result of X.~Zhu,
which had $4\frac{2}{9}$.
Lastly,
we show that
every $K_5$-minor-free graph is $(a:b)$-choosable
iff $\frac{a}{b} \ge 5$.
Along the way, we mention some open problems.
\end{abstract}

\section{Introduction} \label{S:intro}

For $L$ an assignment of lists of colors to
the vertices of a (finite, simple) graph $G,$
a \defterm{$b$-fold $L$-coloring} of $G$
is a mapping $\varphi$ that colors each vertex $v$ of $G$
with a set $\varphi(v)\subseteq L(v)$ of $b$ colors
such that adjacent vertices are colored with disjoint sets.

Following Erd\H{o}s, Rubin, and Taylor~\cite[p.~155]{ERT1980},
for positive integers $a$ and $b$
we say a graph $G$ is
\defterm{$(a:b)$-choosable} if,
for each assignment $L$ of colors
with $|L(v)| = a$ for each vertex $v$,
graph $G$ admits a $b$-fold $L$-coloring.
So the usual notion of $k$-choosability
is the same as $(k:1)$-choosability.

We are interested in results of the following form.
For some fixed class of graphs,
the following are equivalent for positive integers $a$ and $b$:
(i) every graph in the class is $(a:b)$-choosable;
(ii) $\frac{a}{b} \ge r$
(where $r$ is a number that depends on the class of interest).

\medskip

In Section~\ref{S:bipplanar}
we consider the class of bipartite planar graphs.
Alon \& Tarsi~\cite[Corollary~3.4]{AlTa1992} showed that every
bipartite planar graph is $3$-choosable.
The following stronger result was proven by
Gutner~\cite[Corollary~1.11]{Gut1992}
(see also Gutner \& Tarsi~\cite[Corollary~1.11]{GuTa2009}).

\begin{theorem}[Gutner 1992]
  \label{T:planarbip3m-m}
Let $G$ be a bipartite planar graph.
Then $G$ is $(3m:m)$-choosable,
for each positive integer $m$.\ggcnopf\end{theorem}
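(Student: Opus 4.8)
The plan is to prove the theorem by the orientation method of Alon \& Tarsi, which already yields the $m=1$ case, and then to lift it to arbitrary $m$ by a clique blow-up. Recall the Alon--Tarsi criterion: if a graph $H$ has an orientation $D$ in which every out-degree is strictly less than the size of that vertex's list, and for which the number of even spanning Eulerian subdigraphs differs from the number of odd ones, then $H$ admits a proper coloring from those lists. First I would produce, for our bipartite planar graph $G$, an orientation $D$ with maximum out-degree at most $2$. Such an orientation exists by a standard orientation theorem (Hakimi), since every subgraph $H$ of $G$ is again bipartite planar and hence satisfies $|E(H)|\le 2|V(H)|-4\le 2|V(H)|$. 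Because $G$ is bipartite, every directed cycle---and hence every Eulerian subdigraph, being an edge-disjoint union of directed cycles---has an even number of arcs; thus the odd Eulerian subdigraphs number $0$ while the even ones number at least $1$ (the empty subdigraph), so the Alon--Tarsi quantity is nonzero for \emph{every} orientation of $G$. This already gives $3$-choosability, i.e.\ the case $m=1$.

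To handle general $m$, I would pass to the lexicographic substitution $G[K_m]$, obtained by replacing each vertex $v$ of $G$ by a clique $B_v$ on $m$ vertices and joining $B_u$ completely to $B_v$ whenever $uv\in E(G)$. A proper list coloring of $G[K_m]$ in which every vertex of a block $B_v$ receives the \emph{same} list $L(v)$ of size $3m$ is exactly an $m$-fold $L$-coloring of $G$: the clique forces the $m$ colors used in $B_v$ to be distinct (a set of size $m$), and the complete join forces the sets on adjacent blocks to be disjoint. Hence it suffices to exhibit, for $G[K_m]$, an orientation with all out-degrees at most $3m-1$ satisfying the Alon--Tarsi condition, and then to invoke the criterion with the $3m$-element lists. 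I would build this orientation from $D$ by orienting each block as a transitive tournament and, for each arc $u\to v$ of $D$, directing all $m^2$ arcs between $B_u$ and $B_v$ from $B_u$ to $B_v$. Every vertex then has within-block out-degree at most $m-1$ and between-block out-degree at most $2m$ (since its out-degree in $D$ is at most $2$), for a total of at most $3m-1$, as required.

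The main obstacle is verifying that this blown-up orientation still has the Alon--Tarsi property, because the clique blocks introduce odd directed cycles and so the clean parity argument used for $G$ no longer applies directly. The approach I would take is a signed-circulation decomposition: in any Eulerian subdigraph of $G[K_m]$ the between-block arcs satisfy flow conservation block by block (within-block arcs contribute equally to a block's in- and out-totals), so they project to a circulation on $D$; conditioning on this circulation, the admissible within-block completions are exactly the subdigraphs of a transitive tournament realizing a prescribed excess-degree sequence, each counted with sign $(-1)^{\#\text{arcs}}$. The signed count then factors as a sum, over circulations on $D$, of a product of per-block signed tournament counts, and the hard part is to show this sum is nonzero. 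I expect bipartiteness of $G$ to again be the decisive ingredient---forcing the relevant parities to align so that contributions do not cancel---but controlling the signed within-block counts and their interaction across the circulation is the delicate step. Once it is settled, the Alon--Tarsi criterion finishes the proof for all $m$.
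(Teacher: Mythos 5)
Your first paragraph is correct and standard: bipartite planar graphs satisfy $|E(H)|\le 2|V(H)|-4$ for every subgraph on at least $3$ vertices, so Hakimi's theorem gives an orientation $D$ with maximum out-degree $2$; bipartiteness makes every Eulerian subdigraph even, so the Alon--Tarsi difference is nonzero and $3$-choosability follows. The reduction of $(3m:m)$-choosability of $G$ to list coloring of the blow-up $G[K_m]$ is also sound, as is the out-degree count $\le (m-1)+2m = 3m-1$ for the blown-up orientation. (For reference, the paper states this theorem without proof, citing Gutner's thesis and Gutner--Tarsi; the route you sketch is essentially theirs.) But what you have written is not a proof: the one step that carries all the difficulty --- nonvanishing of the signed Eulerian count for the blown-up orientation --- is explicitly deferred (``Once it is settled\dots''), and your stated hope that bipartiteness will make the signs align does not survive inspection.

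Concretely, the blow-up destroys the parity structure you relied on for $m=1$. If $u\to v\to w\to x\to u$ is a directed $4$-cycle of $D$, then $u_1\to u_2\to v_1\to w_1\to x_1\to u_1$ is a directed $5$-cycle in the blown-up digraph (one transitive-tournament arc plus four between-block arcs), so odd Eulerian subdigraphs exist in abundance and the count of odd ones is no longer $0$. Your own decomposition shows where the trouble sits: the between-block arcs of an Eulerian subdigraph do project to a circulation on $D$ and hence have even total (that part is fine), but the conditional per-block signed counts $N_v(\delta)$ are coefficients of the Vandermonde polynomial $\prod_{i<j}(x_{v_i}-x_{v_j})$, hence equal to $0$ or to $\mathrm{sgn}(\pi)=\pm 1$ for a permutation $\pi$ determined by $\delta$. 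So the grand sum $\sum_A \prod_v N_v(\delta_A(v))$ has terms of genuinely mixed signs, and showing it is nonzero requires a real cancellation analysis, not a parity observation. That missing lemma is precisely the Gutner--Tarsi theorem that an orientation with no odd directed cycle and maximum out-degree $d$ yields $((d+1)m:m)$-choosability --- i.e., it is the entire content of the statement you were asked to prove. Without it, your argument unconditionally delivers only the $m=1$ case, plus $(4m:m)$-choosability via $3$-degeneracy and greedy multicoloring, which is strictly weaker than the claimed $(3m:m)$.
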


Using examples based on a construction of
Bar\'{a}t, Joret, \& Wood~\cite[proof of Thm.~1]{BJW2011},
we improve on Gutner's result, showing that
every bipartite planar graph is $(a:b)$-choosable
if and only if
$\frac{a}{b} \ge 3$.
Thus we have a result in our desired form.

\medskip

In Section~\ref{S:planar}
we consider planar graphs in general.
Erd\H{o}s, Rubin, \& Taylor~\cite[p.~153]{ERT1980}
conjectured that every planar graph is $5$-choosable,
while there exists a planar graph that is not $4$-choosable.

That every planar graph is $5$-choosable was proven by
Thomassen~\cite{Tho1994}.
Tuza \& Voigt~\cite[Thm.~3.1]{TuVo1996}
generalized Thomassen's argument to prove
the following.

\begin{theorem}[Tuza \& Voigt 1996]
  \label{T:planar5m-m}
Let $G$ be a planar graph.
Then $G$ is $(5m:m)$-choosable,
for each positive integer $m$.\ggcnopf\end{theorem}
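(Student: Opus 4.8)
The plan is to generalize Thomassen's inductive proof that planar graphs are $5$-choosable~\cite{Tho1994} to the $m$-fold setting. First I would reduce to a convenient class. Since a spanning subgraph of an $(a{:}b)$-choosable graph is again $(a{:}b)$-choosable (color the supergraph from the same lists and restrict), and since every simple planar graph on at least three vertices is a spanning subgraph of an edge-maximal one, it suffices to prove the theorem for plane triangulations. For such graphs I would prove a stronger, induction-friendly statement, analogous to Thomassen's: if $G$ is a near-triangulation (a $2$-connected plane graph whose bounded faces are triangles) with outer cycle $v_1 v_2 \cdots v_p$, and $L$ is a list assignment for which $v_1, v_2$ carry fixed disjoint $m$-element color sets $W_1, W_2$, every other outer vertex has $|L(v)| \ge 3m$, and every inner vertex has $|L(v)| \ge 5m$, then $G$ admits an $m$-fold $L$-coloring agreeing with $W_1, W_2$ on $v_1, v_2$. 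The theorem follows by embedding the triangulation, choosing its outer triangle $v_1 v_2 v_3$, and picking any disjoint $m$-sets $W_1 \subseteq L(v_1)$ and $W_2 \subseteq L(v_2)$ (possible as $5m > 2m$), so that $v_3$ meets the $3m$ bound and every inner vertex meets the $5m$ bound.

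I would prove the strengthened statement by induction on $|V(G)|$, following Thomassen's two cases. If the outer cycle has a chord $v_i v_j$, it splits $G$ into two near-triangulations, $G_1$ containing the edge $v_1 v_2$ and $G_2$, meeting only along $v_i v_j$, each with fewer vertices. Applying the induction hypothesis to $G_1$ produces an $m$-fold coloring; in particular it assigns $v_i, v_j$ disjoint $m$-sets, which I then treat as the precolored pair for $G_2$, to which the induction hypothesis again applies. The two colorings agree on $v_i, v_j$ and combine into an $m$-fold coloring of $G$.

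The crux is the chordless case. If the outer cycle has no chord, I would delete $v_p$. Because $v_p$ lies on no chord, its neighbors other than $v_1$ and $v_{p-1}$ are inner vertices $u_1, \dots, u_t$ forming a fan, and they become the new outer boundary $v_1 u_1 \cdots u_t v_{p-1} \cdots v_2$ of $G - v_p$. Here is where the multiplicity enters: from $L(v_p)$, which has at least $3m$ colors, I reserve a set $S$ of size $2m$ disjoint from $W_1$ (possible since $|L(v_p) \setminus W_1| \ge 2m$), and delete $S$ from each $L(u_j)$. Each $u_j$ then retains at least $5m - 2m = 3m$ colors, so $G - v_p$ satisfies the hypotheses and by induction has an $m$-fold coloring $\varphi$. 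In $\varphi$ the fan vertices and $v_1$ avoid $S$, so the only neighbor of $v_p$ that can use colors of $S$ is $v_{p-1}$; since $|\varphi(v_{p-1})| = m$, the set $S \setminus \varphi(v_{p-1})$ still has at least $2m - m = m$ colors, and coloring $v_p$ with any $m$ of them completes the $m$-fold $L$-coloring.

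The main obstacle is not any single step but getting the strengthened hypothesis and its arithmetic exactly right, since every inequality must generalize simultaneously: the reserved set must be large enough ($2m$) that it both leaves each fan vertex at least $3m$ usable colors after deletion and still has at least $m$ colors surviving the one interfering neighbor $v_{p-1}$. Once the bounds $5m$, $3m$, and the reserve $2m$ are chosen so that $5m - 2m = 3m$ and $2m - m = m$, the argument parallels Thomassen's line for line, and I expect no genuinely new difficulty beyond this bookkeeping and the routine base case of a single triangle, where $v_3$ can be $m$-colored from its list of size $\ge 3m$ after removing the $2m$ colors of $W_1 \cup W_2$.
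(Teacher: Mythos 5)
Your proposal is correct, and it is essentially the proof the paper relies on: the paper cites Tuza \& Voigt, who proved this exactly by generalizing Thomassen's induction, and your strengthened statement (precolored adjacent pair with disjoint $m$-sets, $3m$ on the outer cycle, $5m$ inside) is precisely the paper's Theorem~\ref{T:planar-5m-m}. The arithmetic in your chordless case ($5m-2m=3m$ for the fan, $2m-m=m$ surviving $v_{p-1}$) checks out, so nothing further is needed.
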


A construction of
a planar graph that is not $4$-choosable
was given by
Voigt~\cite{Voi1993}.
Smaller examples are due to
Gutner~\cite[Thm.~1.7]{Gut1996}
and to
Mirzakhani~\cite{Mir1996}.
Using a method similar to that of Gutner,
one can construct, for each positive integer $m$,
a planar graph that is not $(4m:m)$-choosable;
for example, see Zhu~\cite[Thm.~1]{Zhu2017}.

So if $\frac{a}{b} \ge 5$,
then every planar graph is $(a:b)$-choosable,
while if $\frac{a}{b} \le 4$,
then there exists a planar graph that is not $(a:b)$-choosable.
What about ratios strictly between $4$ and $5$?
Zhu~\cite[Thm.~1]{Zhu2017} proved the following.

\begin{theorem}[Zhu 2017]
  \label{T:planar38-9}
Let $a$ and $b$ be positive integers.
If $\frac{a}{b} < 4\frac{2}{9}$,
then there exists a planar graph
that is not $(a:b)$-choosable.\ggcnopf\end{theorem}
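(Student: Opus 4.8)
The plan is to construct, for each positive integer $b$, an explicit planar graph $G_b$ together with a list assignment $L$ of sets of size $a$ (with $a$ taken as large as possible relative to $b$) that admits no $b$-fold $L$-coloring. First I would record a monotonicity reduction: if a graph fails to be $(a':b)$-choosable and $a \le a'$, then shrinking each list to size $a$ shows it also fails to be $(a:b)$-choosable. Hence it suffices, for each fixed $b$, to exhibit a planar graph that is not $(a:b)$-choosable for the largest $a$ with $9a < 38b$; every smaller value of $a$, and so every pair with ratio below $4\frac{2}{9}$, then follows automatically. This isolates the real task: pushing the admissible list size $a$ as close to $\frac{38}{9}b$ as planarity permits.

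For the construction itself I would start from the known planar graphs that are not $(4m:m)$-choosable, obtainable by Gutner's method, and amplify them by a \emph{list-reduction gadget}. The goal of such a gadget, attached at a vertex $v$ whose list has size $a > 4b$, is to compel $v$ to avoid a prescribed block of colors in every $b$-fold coloring, thereby shrinking $v$'s \emph{effective} list toward size $4b$ and handing control to the ratio-$4$ obstruction. Forcing $v$ to avoid a color $c$ requires a neighbor that is itself compelled to use $c$; chaining such compulsions through small triangulated fans that share the relevant vertices, glued inside faces so as to preserve planarity, lets the deficiency propagate. I would then assemble many such gadgets around a common core so that, summing the color demands face by face, the total demand exceeds the total supply. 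Crucially, the reduction cannot be made free: each gadget consumes planar ``budget,'' which is exactly why one cannot drive the effective ratio arbitrarily low and why Theorem~\ref{T:planar5m-m} forces the threshold to stop well below $5$.

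The contradiction would be extracted by a counting argument. Assuming a $b$-fold $L$-coloring exists, I would total, over the vertices of the core and the gadgets, the number of (vertex, color) incidences that each block of the palette is forced to realize, and compare this with the number of incidences the lists can actually supply. The threshold $4\frac{2}{9} = \frac{38}{9}$ should emerge as the solution of the resulting optimization: it is the largest ratio for which a planar, hence Euler-sparse, gadget system can still create a net deficiency. Concretely, Euler's formula caps the number of constraint-carrying faces relative to the number of vertices, and balancing the colors gained per added vertex against the colors that must be reserved yields the surplus $\frac{2}{9}$ over $4$.

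The main obstacle, I expect, is simultaneously keeping the whole construction planar while packing in enough gadgets to reach the ratio $\frac{38}{9}$, and making the forcing rigorous, namely verifying that the gadgets genuinely leave no escape, so that there is no alternative assignment of $b$-sets that dodges every constraint. This is delicate because colors can be shared across gadgets in unintended ways, so the counting must be set up to be robust against such sharing rather than assuming each gadget acts in isolation. The final bookkeeping step is to handle the divisibility of the reserved blocks, ensuring that the gadget demands remain integral for every $b$ so that a single uniform construction covers all the pairs $(a,b)$ at once.
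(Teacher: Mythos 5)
Your opening reduction is fine (restricting lists preserves the failure of $b$-fold colorability, so only the extremal $a$ for each $b$ matters), but everything after it is a programme rather than a proof. The ``list-reduction gadget'' that compels a vertex to avoid a prescribed block of colors is exactly the object whose existence needs to be established, and you never construct it: no gadget graph, no explicit lists, no proof of the forcing claim. The mechanism you sketch---forcing $v$ to avoid a color $c$ via a neighbor ``compelled to use $c$,'' chained through triangulated fans---is not how any known construction above ratio $4$ works, and it is doubtful it can work: with lists of size $a>4b$ and $b$-fold colorings, no vertex can be compelled to use a specific color without itself sitting inside a rigid precolored structure, and making all such compulsions simultaneously consistent and planar is the whole difficulty. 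What Zhu~\cite{Zhu2017} actually proves (and what this paper sharpens to $\frac{22}{5}$ in Theorem~\ref{T:planar22-5}) is a weaker, provable forcing statement: with lists assembled from disjoint blocks of size $b$ plus a spare block $T$ of size $a-4b$, an inclusion--exclusion count inside a small near-triangulation (here, Gutner's $9$-vertex graph $F_1$, where Claims~1 and~2 of Lemma~\ref{L:planar-piece1} give bounds such as $|\varphi(2)\cap\varphi(5)|\ge 5b-a$) shows that a designated vertex must receive \emph{more than half} of $T$ in every coloring. Two such gadgets joined by an edge between their designated vertices then contradict each other, which forbids one specific coloring of two hub vertices; finally one pastes $\binom{a}{b}\binom{a-b}{b}$ copies, one per possible coloring of the hubs, and adds the hub edge. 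None of this three-layer structure (forcing lemma, collision pairing, enumeration over hub colorings) is present in your proposal in verifiable form.

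The step where you extract the constant is also a genuine gap. You assert that $4\frac{2}{9}=\frac{38}{9}$ ``should emerge'' from an optimization balancing Euler-formula sparsity against color demand, but you neither formulate that optimization nor solve it, and no known proof derives the threshold this way: the constant comes from the gadget-internal inequality (in Zhu's gadget; in this paper's gadget the analogous computation yields $11b\le\frac{5}{2}a$, hence $\frac{22}{5}$), not from a global incidence count. Indeed, since the same style of local gadget already achieves $\frac{22}{5}>\frac{38}{9}$, your proposed characterization of $\frac{38}{9}$ as the extremal ratio attainable by planar gadget systems is false, which undermines the claimed source of the contradiction. As written, the proposal contains no graph, no list assignment, and no proved non-colorability; the concluding remarks about color sharing across gadgets and integrality bookkeeping name real obstacles but do not resolve them. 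The missing idea is concrete: a gadget with an explicitly verified property that every $b$-fold coloring over-uses a spare color block, which is what makes the subsequent pasting argument go through.
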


We improve on Zhu's result by showing that
if $\frac{a}{b} < \frac{22}{5} = 4\frac{2}{5}$,
then there exists a planar graph that is not $(a:b)$-choosable.
However,
we do not have a result in our desired form.
We speculate on whether such a result
might hold.

\medskip

In Section~\ref{S:k5minfree}
we consider the larger class of $K_5$-minor-free graphs.
\v{S}krekovski~\cite[Thm.~2.3]{Skr1998}
generalized Thomassen's proof of the $5$-choosability
of planar graphs, showing the following.

\begin{theorem}[\v{S}krekovsi 1998]
  \label{T:k5free-5}
Let $G$ be a $K_5$-minor-free graph.
Then $G$ is $5$-choosable.\ggcnopf
\end{theorem}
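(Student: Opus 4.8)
The plan is to exploit the structure of $K_5$-minor-free graphs given by Wagner's theorem: every such graph can be assembled from planar graphs and copies of the Wagner graph $V_8$ by repeated clique-sums of order at most $3$, that is, by gluing along cliques of size at most $3$. I would then prove the result by induction on the number of building blocks in such a decomposition, having first isolated a property strong enough to survive the gluings. To that end, say that a graph $G$ has property $P$ if, for every assignment of lists of size at least $5$ and every clique $K$ with $|K|\le 3$ together with a proper coloring of $K$ from the lists, that partial coloring extends to a proper coloring of all of $G$. Taking $K$ empty shows that property $P$ implies $5$-choosability, so it suffices to establish $P$ for every $K_5$-minor-free graph.

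The key structural step is that $P$ is closed under clique-sums of order at most $3$. Suppose $G$ is such a sum of $G_1$ and $G_2$ along a clique $K$ with $|K|\le 3$, and that $G_1$ and $G_2$ each have property $P$. Given lists of size at least $5$ on $G$ and a precolored clique $K'$ with $|K'|\le 3$, observe that any clique of $G$ lies wholly in $G_1$ or in $G_2$, since there are no edges between $G_1\setminus K$ and $G_2\setminus K$; say $K'\subseteq G_1$. First extend the precoloring of $K'$ to all of $G_1$ using $P(G_1)$, which in particular colors $K$. Since $K$ is a clique of $G_2$ that is now properly colored from the lists, extend this coloring of $K$ to all of $G_2$ using $P(G_2)$. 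The two colorings agree on $K$, so together they give the desired coloring of $G$.

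It then remains to verify $P$ for the two kinds of building blocks. For $V_8$, which is $3$-regular and triangle-free, every clique has size at most $2$; with lists of size $5$ and maximum degree $3$, a greedy argument (coloring the at most two precolored vertices first and then the rest in any order, each vertex having at least two free colors) establishes $P$ at once. For a planar block I would strengthen the Thomassen-type argument behind Theorem~\ref{T:planar5m-m} so that the outer face may be a \emph{precolored triangle}, and then reduce the general precolored clique to this. We may assume the planar block is a triangulation, since $P$ passes to subgraphs (a proper coloring of a supergraph restricts, and a precolored clique of the subgraph is also a precolored clique of the supergraph). A separating triangle splits the plane graph into two parts, each having the triangle as a facial triangle, and the two parts can be colored separately once the triangle is fixed; a facial precolored triangle $xyz$ reduces to the precolored-edge version by deleting $z$, removing the color of $z$ from the lists of its neighbors (leaving lists of size at least $4\ge 3$ there), and coloring the resulting near-triangulation with the precolored edge $xy$.

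I expect the main obstacle to be exactly this planar base case: carrying the Thomassen inductive coloring through while an entire triangle, rather than a single edge, is precolored, and checking that the various reductions (chords of the outer cycle, separating triangles, and the deletion of the third triangle vertex) all preserve the hypothesis that non-precolored boundary vertices retain lists of size at least $3$ and interior vertices lists of size at least $5$. Once property $P$ is established for both building blocks and shown to be preserved under clique-sums of order at most $3$, the induction on the Wagner decomposition immediately yields property $P$---and hence $5$-choosability---for every $K_5$-minor-free graph.
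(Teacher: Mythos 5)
Your proposal is correct and follows essentially the same route as the paper: the paper proves its multicoloring generalization (Lemma~\ref{L:k5mf-5m-m}, following He, Miao, \& Shen) by exactly this induction over Wagner's decomposition, establishing that a precolored $K_2$ or $K_3$ extends, handling separating cliques by pasting, the Wagner graph $M_8$ greedily, and plane triangulations via a Thomassen-type precolored-edge theorem (Theorem~\ref{T:planar-5m-m}). The only cosmetic difference is the treatment of a precolored outer triangle $xyz$: you delete $z$ and remove its color from its neighbors' lists, while the paper keeps $z$ and forces its color by assigning it the list $\varphi(x)\cup\varphi(y)\cup\varphi(z)$; both reductions are valid.
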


Other proofs of \v{S}krekovski's result are due to
He, Miao, \& Shen~\cite[Thm.~2.1]{HMS2008}
and to
Wood \& Linusson~\cite[Thm.~1]{WoLi2010}.

We prove a result in our desired form
that generalizes both the
Tuza-Voigt result (Theorem~\ref{T:planar5m-m})
and the \v{S}krekovski result (Theorem~\ref{T:k5free-5}):
that every $K_5$-minor-free graph is $(a:b)$-choosable
if and only if
$\frac{a}{b} \ge 5$.

\medskip

We denote the vertex set of a graph $G$ by $V(G)$.
When describing lists of colors,
we will generaly omit union operators in a union of disjoint sets.
For example, $\mathit{XPT}$ means $X\cup P\cup T,$
for disjoint sets $X,$ $P,$ and $T.$

\section{Bipartite Planar Graphs} \label{S:bipplanar}

In this section we prove the following theorem.

\begin{theorem}
  \label{T:planarbip3m-m-iff}
The following are equivalent
for positive integers $a$ and $b$.
\begin{enumerate}
\item Every bipartite planar graph is $(a:b)$-choosable.
\item $\displaystyle\frac{a}{b}\ge 3$.\ggcnopf
\end{enumerate}
\end{theorem}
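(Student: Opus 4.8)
The plan is to prove the two implications separately. The implication (ii)$\Rightarrow$(i) will follow almost immediately from Gutner's theorem (Theorem~\ref{T:planarbip3m-m}) together with a trivial monotonicity observation, while (i)$\Rightarrow$(ii) will be proved in contrapositive form: for every pair $a,b$ with $\frac{a}{b}<3$ I will exhibit a bipartite planar graph that is not $(a:b)$-choosable.

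For (ii)$\Rightarrow$(i), suppose $\frac{a}{b}\ge 3$, i.e.\ $a\ge 3b$. I would apply Theorem~\ref{T:planarbip3m-m} with $m=b$, so that every bipartite planar graph $G$ is $(3b:b)$-choosable, and then invoke the observation that $(a':b)$-choosability is monotone in the first coordinate: given an assignment of lists of size $a\ge 3b$, restrict each list to an arbitrary sub-list of size $3b$, produce a $b$-fold coloring for these, and note that it remains valid for the original (larger) lists. Hence $G$ is $(a:b)$-choosable. This direction requires essentially no new work.

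The content lies in (i)$\Rightarrow$(ii). Arguing by contraposition, I assume $a<3b$ and build a bad example. The key structural point is that the largest complete bipartite planar graph with an unbounded part is $K_{2,t}$ (since $K_{3,3}$, and hence $K_{3,t}$ for $t\ge 3$, is nonplanar), and two ``spine'' vertices carrying \emph{disjoint} lists of size $a$ forbid exactly $2b$ colors at each common neighbor; the threshold $a-2b<b \iff a<3b$ is precisely what makes the construction succeed. Concretely, I take two nonadjacent spine vertices $x,y$ with disjoint lists $L(x)=P$ and $L(y)=Q$, where $|P|=|Q|=a$. For every pair $(B_1,B_2)$ with $B_1$ a $b$-subset of $P$ and $B_2$ a $b$-subset of $Q$, I introduce a \emph{blocker} vertex $w_{B_1,B_2}$ adjacent to both $x$ and $y$, and I assign it a list of size $a$ chosen to maximize overlap with $B_1\cup B_2$: take $L(w_{B_1,B_2})\supseteq B_1\cup B_2$ when $a\ge 2b$, and $L(w_{B_1,B_2})\subseteq B_1\cup B_2$ when $a<2b$. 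The resulting graph is $K_{2,t}$ with $t=\binom{a}{b}^2$, which is bipartite and planar.

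To finish, I would verify the assignment admits no $b$-fold coloring. In any such coloring, $x$ receives some $B_1\subseteq P$ and $y$ some $B_2\subseteq Q$; since $P\cap Q=\emptyset$ these are disjoint, so $|B_1\cup B_2|=2b$. The matching blocker $w_{B_1,B_2}$ then has only $|L(w_{B_1,B_2})\setminus(B_1\cup B_2)|=\max(0,a-2b)<b$ colors of its list available, and so cannot be $b$-colored. I expect the main obstacle to be bookkeeping rather than ideas: checking that the blocker lists have the correct size and overlap in both regimes $a<2b$ and $2b\le a<3b$, and confirming planarity of $K_{2,t}$ for arbitrarily large $t$. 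The reason the critical ratio is exactly $3$ is structural---planarity caps the number of mutually spine-like vertices at two, so the forbidden set at each blocker has size $2b$; for a class admitting $K_{s,t}$ one would analogously expect the ratio $s+1$, which is the thread I would follow into the later sections.
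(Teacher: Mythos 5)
Your proof is correct, and the key gadget is genuinely different from the paper's. Both arguments share the same outer shell (a variation on Bar\'{a}t--Joret--Wood): fix two spine vertices, enumerate every way they could receive $b$-sets from their lists, and attach one blocking gadget per such coloring. The paper gives the two spine vertices the \emph{same} list of $a$ colors and joins them by an edge to force disjointness; a single blocker adjacent to both would then create a triangle, so to preserve bipartiteness the paper uses a two-vertex gadget---the middle of a $P_4$ with lists $\mathit{XPT}$ and $\mathit{YPT}$ (Lemma~\ref{L:planarbip-piece})---and pastes together $q=\binom{a}{b}\binom{a-b}{b}$ copies, plus the edge $14$. You instead give the spine vertices \emph{disjoint} lists $P$ and $Q$ of size $a$, so disjointness of $\varphi(x)$ and $\varphi(y)$ is automatic, no spine edge is needed, and a single common neighbor per pair $(B_1,B_2)$ suffices; the whole example collapses to $K_{2,t}$ with $t=\binom{a}{b}^2$, which is trivially bipartite and planar. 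Your blocker arithmetic checks out in both regimes: $|L(w)\setminus(B_1\cup B_2)| = \max(0,\,a-2b) < b$ precisely because $a < 3b$ (when $a\ge 2b$ you should take the $a-2b$ padding colors outside $P\cup Q$, say fresh colors, so the count is exact; the degenerate case $a<b$ is vacuously fine since $x$ itself cannot be colored). What your route buys is a simpler, lemma-free verification and a slightly stronger conclusion---even the complete bipartite planar graphs $K_{2,t}$ fail to be $(a:b)$-choosable when $\frac{a}{b}<3$---at the immaterial cost of $\binom{a}{b}^2$ gadgets rather than $\binom{a}{b}\binom{a-b}{b}$. Your (ii)$\Longrightarrow$(i) direction (Gutner's Theorem~\ref{T:planarbip3m-m} with $m=b$ plus list-restriction monotonicity) matches the paper, which leaves the monotonicity step implicit. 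One caveat on your closing heuristic: a $K_{s,t}$-style construction yields only the \emph{negative} direction (non-$(a:b)$-choosability for $\frac{a}{b}<s+1$); the matching positive direction requires a separate argument, as the paper's Sections~\ref{S:planar} and \ref{S:k5minfree} illustrate.
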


We begin with a lemma giving a list-coloring property
of $P_4$, a $4$-vertex path
(see Figure~\ref{Fig:path}).
Later, we will verify the (i) $\Longrightarrow$ (ii)
portion of Theorem~\ref{T:planarbip3m-m-iff}
using examples constructed by pasting together
multiple copies of $P_4$.

\begin{figure}  
\centering
\includegraphics{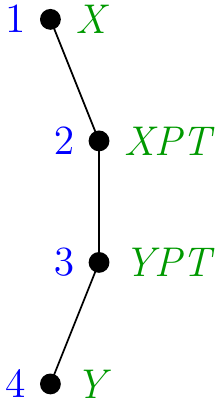}
\caption{$P_4$, a $4$-vertex path,
with vertices labeled
and lists of colors shown,
used in Lemma~\ref{L:planarbip-piece}
and the proof of Theorem~\ref{T:planarbip3m-m-iff}.}
\label{Fig:path}
\end{figure}

\begin{lemma}
  \label{L:planarbip-piece}
Let $a$ and $b$ be positive integers with
$2\le \frac{a}{b} < 3$.
Let $X,$ $Y,$ $P,$ and $T$ be pairwise disjoint
lists of colors,
such that $X,$ $Y,$ and $P$ have size $b$,
while $T$ has size $a-2b$.
(Note that $a-2b \ge 0$;
if $\frac{a}{b} = 2$, then $T = \varnothing$.)

Define a color assignment $L$
for $P_4$, a $4$-vertex path, as follows.
Label the vertices $1$, $2$, $3$, $4$,
in order along the path.
Let $L(1) = X,$
$L(2) = \mathit{XPT},$
$L(3) = \mathit{YPT},$
and $L(4) = Y,$
as shown in Figure~\ref{Fig:path}.

Then $P_4$ admits no $b$-fold $L$-coloring.
\end{lemma}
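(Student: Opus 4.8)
The plan is to argue by contradiction: suppose $\varphi$ is a $b$-fold $L$-coloring of $P_4$ and derive a pigeonhole contradiction from the sizes of the lists. The whole proof is really a short counting argument, with the key being to see how the two endpoint constraints force both middle vertices into the same small color pool $\mathit{PT}$.

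First I would pin down the colorings at the two endpoints. Since $L(1) = X$ has size exactly $b$ and a $b$-fold coloring must assign a $b$-element subset of the list, vertex $1$ is forced to receive $\varphi(1) = X$; symmetrically $\varphi(4) = Y$. Next I would propagate these forced colors inward along the edges. Because vertex $2$ is adjacent to vertex $1$, disjointness gives $\varphi(2) \cap X = \varnothing$, so $\varphi(2) \subseteq L(2) \setminus X = \mathit{PT}$. Likewise, vertex $3$ is adjacent to vertex $4$, so $\varphi(3) \cap Y = \varnothing$ and $\varphi(3) \subseteq L(3) \setminus Y = \mathit{PT}$. Thus both $\varphi(2)$ and $\varphi(3)$ are $b$-element subsets of $\mathit{PT}$.

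The contradiction then comes from counting together with the edge between vertices $2$ and $3$. Since $P$ has size $b$ and $T$ has size $a - 2b$, the pool $\mathit{PT}$ has size exactly $(a - 2b) + b = a - b$. The hypothesis $\frac{a}{b} < 3$ gives $a < 3b$, hence $|\mathit{PT}| = a - b < 2b$. But $\varphi(2)$ and $\varphi(3)$ are adjacent, so they must be disjoint, and two disjoint $b$-element subsets of $\mathit{PT}$ would require $|\mathit{PT}| \ge 2b$. This is impossible, so no such $\varphi$ exists.

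There is no genuine obstacle here; the argument is an elementary pigeonhole once the setup is understood. The only point that requires a little care is the forced-coloring step at the endpoints, which relies on $|L(1)| = |L(4)| = b$ being exactly the required coloring size, and the bookkeeping that $a - 2b \ge 0$ (guaranteed by $\frac{a}{b} \ge 2$) so that the lists are well-defined and $\mathit{PT}$ has the stated size. I would state explicitly that the hypothesis $\frac{a}{b} \ge 2$ is what makes $b$-fold colorings of the middle vertices conceivable at all, while it is precisely the strict inequality $\frac{a}{b} < 3$ that defeats them.
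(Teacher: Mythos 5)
Your proposal is correct and follows essentially the same argument as the paper's proof: the endpoint lists force $\varphi(1)=X$ and $\varphi(4)=Y$, pushing $\varphi(2)$ and $\varphi(3)$ into disjoint $b$-subsets of $\mathit{PT}$, which is impossible since $|\mathit{PT}|=a-b<2b$. No gaps to report.
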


\begin{proof}
In a $b$-fold $L$-coloring $\varphi$ of $P_4$,
we must have $\varphi(1) = X$
and
$\varphi(4) = Y.$
So $\varphi(2)$ and $\varphi(3)$
are disjoint subsets of $\mathit{PT},$
each of size $b$,
and thus $|\varphi(2)\cup\varphi(3)| = 2b$.
However, $|\mathit{PT}| < 2b$,
so no such coloring can exist.\ggcendpf
\end{proof}

When considering $(a:b)$-choosability,
Lemma~\ref{L:planarbip-piece}
says we can forbid a specific
coloring of vertices $1$ and $4$ of $P_4$
with disjoint sets of colors.
Using this idea,
we can paste together copies of $P_4$
to construct a bipartite planar graph in which
all possible colorings of two vertices are forbidden.
We use this idea in the proof of Theorem~\ref{T:planarbip3m-m-iff}.

\begin{proof}[Proof of Theorem~\ref{T:planarbip3m-m-iff}]
(ii) $\Longrightarrow$ (i).
This follows from Theorem~\ref{T:planarbip3m-m}.

\medskip

\noindent
(i) $\Longrightarrow$ (ii).
Let $a$ and $b$ be positive integers
with $\frac{a}{b} < 3$.
We construct a bipartite planar graph $G$
such that $G$ is not $(a:b)$-choosable.

If $\frac{a}{b} < 2$, then we may let $G = K_2$.
Suppose, therefore, that $2 \le \frac{a}{b} < 3$.
Let
\[
q = \binom{a}{b} \binom{a-b}{b}.
\]

To construct graph $G,$
begin with $q$ copies of $P_4$,
pictured in Figure~\ref{Fig:path}.
Identify all the $1$ vertices in these copies,
labeling the resulting vertex as $1$.
Similarly identify all the $4$ vertices,
labeling the resulting vertex as $4$.
Add an edge joining vertices $1$ and $4$.
Let $G$ be the resulting graph.
See Figure~\ref{Fig:multipath} for an illustration
of graph $G.$
(The construction of $G$
is a variation on a construction of
Bar\'{a}t, Joret, \& Wood~\cite[proof of Thm.~1]{BJW2011}.)

\begin{figure}  
\centering
\includegraphics{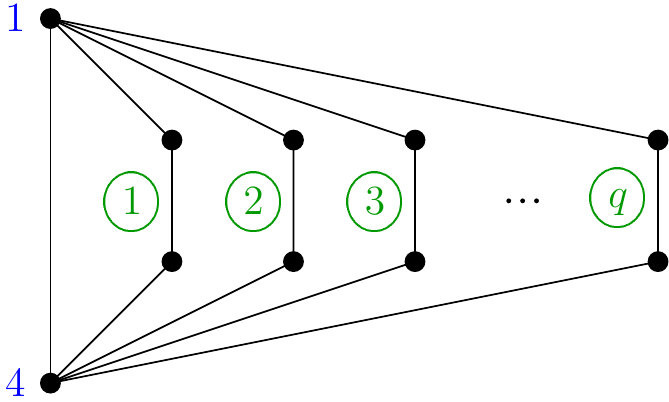}
\caption{Graph $G$ from the (i) $\Longrightarrow$ (ii) part
of the proof of Theorem~\ref{T:planarbip3m-m-iff},
with vertices $1$ and $4$ labeled.
Each copy of $P_4$
is labeled with a circled number from $1$ to $q$.}
\label{Fig:multipath}
\end{figure}

Graph $G$ is bipartite and planar.
It remains to show that $G$ is not $(a:b)$-choosable.

Assign vertices $1$ and $4$ the same list of $a$ colors.
The number of ways these two vertices can be colored with
disjoint sets of size $b$ is $\binom{a}{b}\binom{a-b}{b} = q$.
Create an (arbitrary) correspondence between these colorings
and the $q$ copies of $P_4$.
For each possible coloring of vertices $1$ and $4$,
assign lists of colors to the $2$ and $3$
vertices in the corresponding copy of $P_4$
so that Lemma~\ref{L:planarbip-piece}
allows us to conclude that vertices $1$ and $4$
cannot be colored in this manner.

The result is an assignment $L$ of lists of colors
with $|L(v)| = a$ for all $v\in V(G)$,
such that $G$ admits no $b$-fold $L$-coloring,
since no coloring is possible for vertices $1$ and $4$.
Thus, $G$ is not $(a:b)$-choosable.\ggcendpf\end{proof}

\section{General Planar Graphs} \label{S:planar}

In this section we prove the following theorem.

\begin{theorem}
  \label{T:planar22-5}
Let $a$ and $b$ be positive integers.
If $\frac{a}{b} < \frac{22}{5}$,
then there exists a planar graph $G$
such that $G$ is not $(a:b)$-choosable.\ggcnopf
\end{theorem}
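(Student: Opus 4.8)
The plan is to reuse the template of the bipartite argument (the proof of Theorem~\ref{T:planarbip3m-m-iff}): build a planar \emph{gadget} with a small set of terminal vertices and a partial list assignment that makes certain colorings of the terminals impossible to extend, and then paste many copies of such gadgets around the terminals so that \emph{every} coloring of a distinguished pair of adjacent ``master'' vertices is forbidden. Since any family of planar gadgets sharing only an edge $uv$, with $u$ and $v$ on each gadget's outer face, can be glued along that edge in book fashion without destroying planarity (exactly as the copies of $P_4$ are identified along vertices $1$ and $4$), the real work is to design a single gadget that forbids a prescribed coloring of its two terminals, and to make that gadget efficient enough to reach the ratio $\frac{22}{5}$.

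The mechanism driving the gadget is the planar analogue of the squeeze in Lemma~\ref{L:planarbip-piece}. If the terminals $u$ and $v$ are colored with disjoint $b$-sets $A$ and $B$, then any common neighbor whose list contains $A\cup B$ effectively loses $|A\cup B| = 2b$ colors, so its usable palette shrinks from size $a$ to size $a-2b$; the interior of the gadget then faces an $(a-2b\!:\!b)$-coloring problem. The arithmetic is encouraging: when $\frac{a}{b} < \frac{22}{5}$ we have $\frac{a-2b}{b} < \frac{12}{5} < 3$, which lies below the bipartite threshold of Theorem~\ref{T:planarbip3m-m-iff}, so once the terminals have acted, the residual obstruction lives in a regime where bipartite-planar non-choosability is already available to us.

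The obstacle, and the source of the gap between the naive bound $4$ and the target $\frac{22}{5}$, is the planarity of the anchoring. One cannot make every interior vertex a common neighbor of both $u$ and $v$: with the edge $uv$ on the outer face, the common neighbors of $u$ and $v$ can only be arranged as a path, and a path is $(s\!:\!b)$-choosable exactly when $s\ge 2b$; taking $s = a-2b$ recovers only the threshold $a\ge 4b$. To do better, the gadget must contain interior vertices anchored to at most one terminal, or to neither, that participate in a non-bipartite, odd-cycle-type constraint, so that non-colorability becomes a genuinely global phenomenon rather than the failure of a single edge. Determining the most efficient such gadget is the crux: I would set up the construction recursively, using the partially anchored boundary of one layer as the already palette-reduced master pair of the next, and track the ratio through the resulting recurrence; equivalently, one extremizes a counting inequality, the analogue of $|\mathit{PT}| < 2b$, balancing the $2b$ colors removed by each anchor against the limited number of vertices the anchors can reach in the plane. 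The value $\frac{22}{5}$ should emerge as the optimum of this trade-off, improving on Zhu's $\frac{38}{9}$ of Theorem~\ref{T:planar38-9}.

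With the gadget in hand, the assembly mirrors the bipartite proof: give the master vertices $u$ and $v$ a common list of $a$ colors, enumerate the $\binom{a}{b}\binom{a-b}{b}$ ways to color them with disjoint $b$-sets, and attach to the edge $uv$ one optimized gadget per such coloring, with interior lists chosen (depending on that coloring's $A$ and $B$) so the gadget cannot be extended. Because the gadgets share only $u$ and $v$ and each keeps the edge $uv$ on its outer face, the union is planar, and no coloring of $u$ and $v$ survives, so the graph is not $(a:b)$-choosable. Verifying the per-gadget non-colorability and the base case should be routine bookkeeping; the genuinely hard, creative step is the gadget design of the previous paragraph, where the planar arrangement must be engineered to realize the full ratio $\frac{22}{5}$.
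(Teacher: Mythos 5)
Your assembly scaffolding matches the paper's exactly (paste $q=\binom{a}{b}\binom{a-b}{b}$ gadgets along the edge joining two master vertices, one gadget per disjoint pair of $b$-sets, with $K_4$ handling the range $\frac{a}{b}<4$), but the gadget itself---the entire mathematical content of the theorem---is never constructed. You correctly observe that the naive anchoring, in which common neighbors of the two terminals lose $2b$ colors, only reaches the threshold $4$, and you then defer the improvement to an unspecified recursive layering whose ``optimum'' you assert \emph{should} emerge as $\frac{22}{5}$. Nothing in the proposal exhibits a graph, exhibits lists, proves a non-extendability statement, or derives the number $\frac{22}{5}$ from any inequality; the proof stops exactly where the work begins. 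For comparison, the paper's gadget (Lemmas~\ref{L:planar-piece1} and~\ref{L:planar-piece2}) is not found by optimizing a layered recurrence: it takes a fixed $9$-vertex graph $F_1$ (the Gutner/Voigt--Wirth diamond chain) with terminals precolored $X$ and $Y$ and interior lists built from disjoint $b$-blocks $P$, $Q$, $R$ plus a small slack block $T$ of size $a-4b$; two counting claims force $|\varphi(2)\cap\varphi(5)|\ge 5b-a$ and $|\varphi(5)\cap\varphi(8)|\ge 5b-a$, and summing these against $|\varphi(5)|=b$ shows every $b$-fold coloring must place \emph{more than half of $T$} on vertex $8$---this is precisely where the inequality $11b\le\frac{5}{2}a$, i.e.\ the threshold $\frac{22}{5}$, arises. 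Two such half-gadgets sharing their terminals, with an edge joining the two copies of vertex $8$, then clash on $T$, yielding the non-colorable gadget $F_2$.

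Beyond the missing construction, the heuristic you lean on points in an unreliable direction. The claim that ``the interior faces an $(a-2b\,{:}\,b)$ problem below the bipartite threshold $3$'' holds whenever $\frac{a}{b}<5$, so if that reduction were legitimate it would prove far more than the theorem; the reason it fails is the one you half-acknowledge (planarity forbids anchoring every interior vertex to both terminals), but your proposed remedy---partially anchored layers with an odd-cycle constraint, balanced by an extremal trade-off---is not carried out, and there is no argument that such an optimization would terminate at $\frac{22}{5}$ rather than at $4$ or at Zhu's $\frac{38}{9}$. Note also that the paper's quantitative mechanism is not a palette-reduction count at all: it is a pigeonhole on the single small set $T$, forcing one designated vertex in each half-gadget to hoard a majority of $T$ and then making two such vertices adjacent. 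Without that idea, or a genuinely worked-out substitute, the proposal is a research plan rather than a proof.
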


Once again,
our proof will use
examples constructed by pasting together
small graphs.
Our construction is somewhat similar
to one due to Zhu~\cite[Lemma~2, proof of Thm.~1]{Zhu2017}---which,
in turn, has similarities
with a construction of Gutner~\cite[proof of Thm.~1.7]{Gut1996}.

\begin{figure}  
\centering
\includegraphics{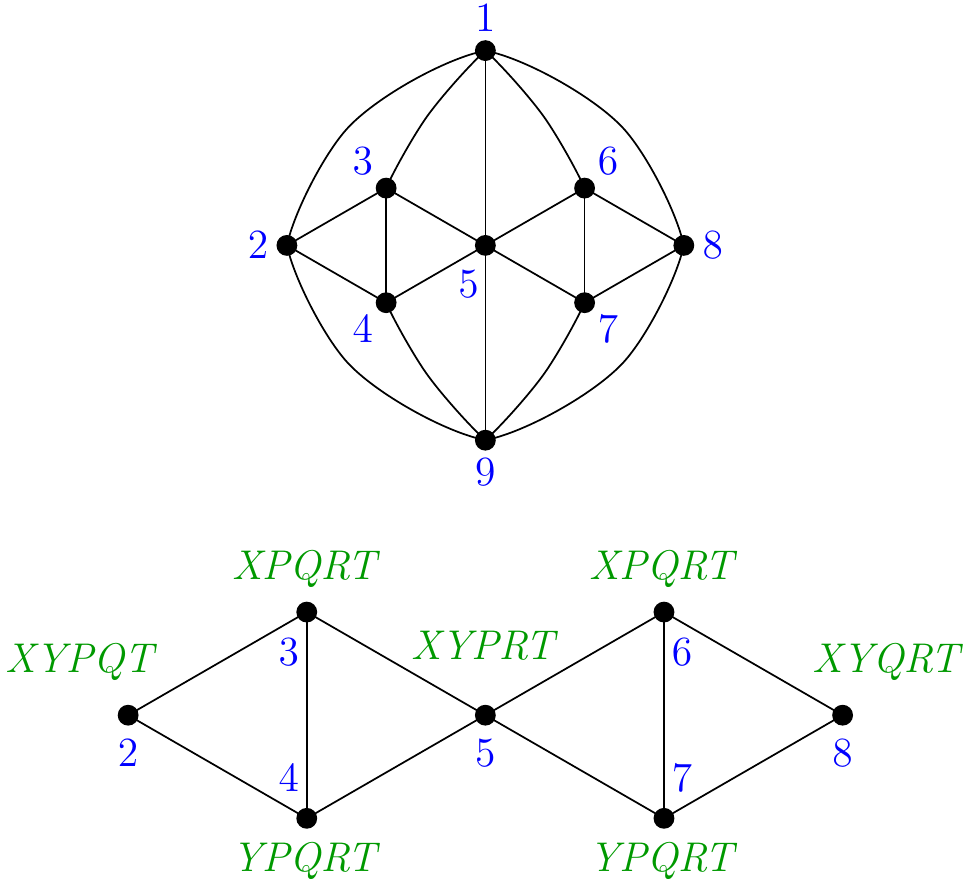}
\caption{Above, graph $F_1$ from Lemmas~\ref{L:planar-piece1}
and \ref{L:planar-piece2}.
Below, detail of $F_1$
showing lists of colors assigned to vertices $2$ through $8$.
Graph $F_1$ is identical to a graph
of Gutner~\cite[Figure~1]{Gut1996}
and of Voigt \& Wirth~\cite[Figure~1]{VoWi1997};
however, our color lists differ.}
\label{Fig:diamond2}
\end{figure}

We begin with two lemmas concerning graphs we call $F_1$ and $F_2$,
which are pictured in
the top of Figure~\ref{Fig:diamond2}
and in Figure~\ref{Fig:diamond4},
respectively.

\begin{lemma}
  \label{L:planar-piece1}
Let $a$ and $b$ be positive integers
with $4 \le \frac{a}{b} < \frac{22}{5}$.
Let $X,$ $Y,$ $P,$ $Q,$ $R,$ and $T$ be pairwise disjoint
lists of colors,
such that $X,$ $Y,$ $P,$ $Q,$ and $R$ have size $b$,
while $T$ has size $a-4b$.
(Note that $a-4b \ge 0$;
if $\frac{a}{b} = 4$, then $T = \varnothing$.)

Let $F_1$ be the graph pictured in the top portion of
Figure~\ref{Fig:diamond2},
with vertices labeled $1,\dots,9$ as shown.
Define a color assignment $L$
such that $L(1) = X$ and $L(9) = Y,$
while
vertices $2$ through $8$ are assigned lists of colors
as shown in the bottom portion of
Figure~\ref{Fig:diamond2}.

Then
in any $b$-fold $L$-coloring $\varphi$ of $F_1$,
we have $|\varphi(8)\cap T| > \frac{1}{2} |T|$;
that is,
the set with which vertex $8$ is colored
includes more than half of the elements of $T.$
\end{lemma}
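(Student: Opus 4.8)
The plan is to exploit how tight the list sizes are: from $a/b < \tfrac{22}{5}$ we get $|T| = a - 4b < \tfrac{2}{5}b$, so $T$ is small compared with each size-$b$ class $X,Y,P,Q,R$ and a $b$-fold coloring has very little freedom. First I would pin the endpoints. Since $|L(1)| = |X| = b$ and $\varphi$ is a $b$-fold coloring, vertex $1$ must use its entire list, so $\varphi(1) = X$; by the symmetric argument $\varphi(9) = Y$. These two forced equalities are the seeds from which all other constraints grow.

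Next I would propagate inward along the edges of $F_1$, reading the lists off the bottom of Figure~\ref{Fig:diamond2}. The two facts that drive everything are: on each edge the two color sets are disjoint, and each color set has size exactly $b$ while each list has size $a = 4b + |T|$, so a vertex adjacent to several already-constrained vertices has almost no slack. Concretely, for each internal vertex $v$ and each class $C \in \{X,Y,P,Q,R,T\}$ I would track the number $|\varphi(v)\cap C|$, subject to the membership identities $\sum_{C\subseteq L(v)} |\varphi(v)\cap C| = b$ and the per-class disjointness inequalities $|\varphi(u)\cap C| + |\varphi(v)\cap C| \le |C|$ for each edge $uv$. Starting from $\varphi(1)=X$ and $\varphi(9)=Y$ and moving toward vertex $8$, these should force a nearly rigid distribution of $P,Q,R$ on the interior, leaving $T$ as essentially the only movable resource.

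I would then concentrate on vertex $8$ and its neighbours. The goal is to show that the neighbours are collectively forced to occupy so much of $P,Q,R$ (and $X,Y$) that $\varphi(8)$ is squeezed into collecting most of $T$: summing the disjointness inequalities around $8$ and substituting the forced interior values should give $|\varphi(8)\cap T| \ge |T| - s$ with $s < \tfrac12|T|$, where the strict bound on $s$ reduces exactly to $5|T| < 2b$, i.e.\ to $a/b < \tfrac{22}{5}$. I expect the gadget to be engineered so that this threshold is precisely where ``$\varphi(8)$ must contain more than half of $T$'' first becomes forced. Note that when $a/b = 4$ we have $T = \varnothing$, and the inequality degenerates to $0 > 0$; this is not a defect but the statement that $F_1$ then admits no $b$-fold $L$-coloring at all, and the argument should produce this contradiction uniformly without a separate case.

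The main obstacle is the bookkeeping, not any single clever idea. Because the conclusion is a strict, fractional statement about one vertex, the chain of inequalities must be assembled so that the slack introduced at each interior vertex is accounted for exactly and the strictness of $a/b < \tfrac{22}{5}$ survives the combination. I would organize the estimate around the single quantity $|T \setminus \varphi(8)|$ --- the amount of $T$ pushed off vertex $8$ --- and bound it above by $\tfrac12|T|$, rather than carrying all six classes through every vertex; the apparent symmetry in Figure~\ref{Fig:diamond2} between the roles of $X,P$ and $Y,R$ should let me cut the casework roughly in half.
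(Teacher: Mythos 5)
Your setup is right ($\varphi(1)=X$ and $\varphi(9)=Y$ are forced, hence $\varphi(v)\cap(X\cup Y)=\varnothing$ for $v\in\{2,\dots,8\}$), and your arithmetic correctly locates the threshold ($5|T|<2b$ iff $\frac{a}{b}<\frac{22}{5}$, with the $T=\varnothing$ case handled uniformly, just as in the paper). But the heart of the proof is missing, and the machinery you propose in its place cannot produce it. The paper's argument hinges on two overlap claims about \emph{non-adjacent} pairs: $|\varphi(2)\cap\varphi(5)|\ge 5b-a$ and $|\varphi(5)\cap\varphi(8)|\ge 5b-a$. Each comes from a diamond: vertices $3$ and $4$ are adjacent to each other and to both $2$ and $5$, their usable colors lie in $P\cup Q\cup R\cup T$ (only $a-b$ colors), and together they need $2b$ colors disjoint from $\varphi(2)\cup\varphi(5)$; if the overlap were smaller than $5b-a$, fewer than $2b$ colors would remain (and symmetrically for $6$, $7$ between $5$ and $8$). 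Your proposed relaxation---tracking the counts $|\varphi(v)\cap C|$ subject to per-edge inequalities $|\varphi(u)\cap C|+|\varphi(v)\cap C|\le|C|$---cannot express this: $2$ and $5$ are not adjacent (likewise $5$ and $8$), so no edge inequality touches their overlap, and even on the triangles such as $\{2,3,4\}$ the pairwise per-class inequalities are strictly weaker than the clique constraint $\sum_v|\varphi(v)\cap C|\le|C|$ (pairwise sums at most $|C|$ still permit a total of $\frac{3}{2}|C|$). The forcing is a set-level statement about intersections of colorings of non-adjacent vertices, not something recoverable from per-class counts along edges.

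Your guiding picture is also off in a way that matters: the interior is \emph{not} ``nearly rigid,'' and the final step is not a local squeeze around vertex $8$. Vertex $5$ retains genuine freedom in how it splits its $b$ colors among $P$, $R$, and $T$; the contradiction comes from its budget being over-committed. Since the list of $2$ meets the list of $5$ only in $P\cup T$, and the list of $5$ meets the list of $8$ only in $R\cup T$, the two overlap claims combine as $10b-2a\le\bigl(|\varphi(5)\cap P|+|\varphi(5)\cap T|\bigr)+\bigl(|\varphi(5)\cap R|+|\varphi(8)\cap T|\bigr)\le|\varphi(5)|+|\varphi(8)\cap T|=b+|\varphi(8)\cap T|$, giving $|\varphi(8)\cap T|\ge 9b-2a$, which exceeds $\frac{1}{2}|T|=\frac{a-4b}{2}$ precisely when $\frac{a}{b}<\frac{22}{5}$. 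So $\varphi(8)\cap T$ is the lone escape valve for a tension created two diamonds away at vertex $2$---a mechanism that ``substitute the forced interior values and sum the inequalities around $8$'' has no way to reach, since $8$'s only constrained neighbors, $6$ and $7$, are not rigidly colored either. To repair the proposal you would need to add exactly these two diamond overlap claims and the budget computation at vertex $5$.
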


\begin{proof}
Suppose that $\varphi$ is a $b$-fold $L$-coloring of $F_1$.
Observe that $\varphi(v) \cap \mathit{XY} = \varnothing$
for all $v\in \{2,\dots,8\}$.

We begin by proving two claims.

\medskip
\noindent\textbf{Claim~1.}
$|\varphi(2)\cap \varphi(5)| \ge 5b-a$.
\medskip

Suppose not:
$|\varphi(2)\cap \varphi(5)| < 5b-a$.
Then
\begin{align*}
|\varphi(2)\cup \varphi(5)|
  &= |\varphi(2)| + |\varphi(5)| - |\varphi(2)\cap \varphi(5)|\\
  &> b + b - (5b-a)\\
  &= a-3b.
\end{align*}
Colors usable on vertices $3$ and $4$
are those in $\mathit{PQRT}$:
a total of $3b+(a-4b) = a-b$ colors.
Removing colors in $\varphi(2)\cup \varphi(5)$---more
than $a-3b$ colors, by the above---the
number of colors still available for vertices $3$ and $4$
is less than $(a-b)-(a-3b) = 2b$.
But vertices $3$ and $4$ are adjacent,
so $|\varphi(3)\cup \varphi(4)| = 2b$,
a contradiction,
and Claim~1 is proven.

\medskip
\noindent\textbf{Claim~2.}
$|\varphi(5)\cap \varphi(8)| \ge 5b-a$.
\medskip

The proof of Claim~2 is much the same as that of Claim~1;
only the vertex labels differ.

\medskip
\noindent\textbf{Finishing.}
Now we prove the conclusion of the lemma, that
\[
|\varphi(8)\cap T| > \frac{1}{2}|T|.
\]

Suppose not:
$|\varphi(8)\cap T| \le \frac{1}{2} |T| = \frac{a-4b}{2}$.
Then
\begin{align*}
10b-2a
  &\le |\varphi(2)\cap \varphi(5)|+|\varphi(5)\cap \varphi(8)|
    && \text{by Claims~1 \& 2}\\
  &\le \Bigl(|\varphi(5)\cap P| + |\varphi(5)\cap T|\Bigr) &&\\
  &\qquad\qquad\qquad + \Bigl(|\varphi(5)\cap R| + |\varphi(8)\cap T|\Bigr) &&\\
  &= |\varphi(5)| + |\varphi(8)\cap T| &&\\
  &\le b + \frac{a-4b}{2} &&\text{by our supposition}\\
  &= \frac{a}{2}-b. &&
\intertext{Gathering terms in $10b-2a\le \frac{a}{2}-b$, we have:}
11b &\le \frac{5}{2}a &&\\
  &< \frac{5}{2}\cdot\frac{22}{5} b = 11b
    &&\text{since $\frac{a}{b} < \frac{22}{5}$.}
\end{align*}
But $11b < 11b$ is impossible,
and the lemma is proven.\ggcendpf\end{proof}

\begin{figure}  
\centering
\includegraphics{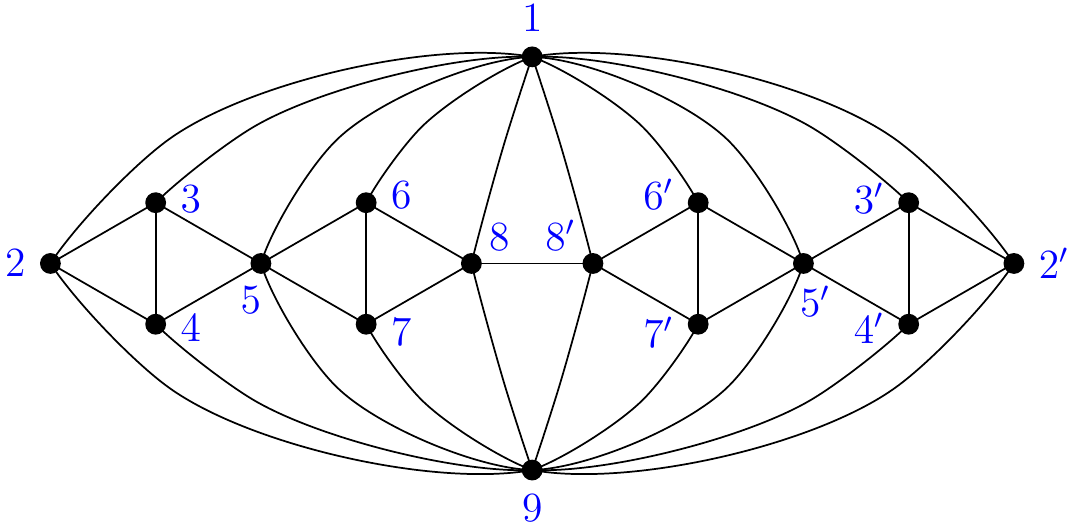}
\caption{Graph $F_2$ from Lemma~\ref{L:planar-piece2}
and the proof of Theorem~\ref{T:planar22-5}.}
\label{Fig:diamond4}
\end{figure}

\begin{lemma}
  \label{L:planar-piece2}
Let $a$ and $b$ be positive integers
with $4 \le \frac{a}{b} < \frac{22}{5}$.
Let $X,$ $Y,$ $P,$ $Q,$ $R,$ and $T$ be pairwise disjoint
lists of colors,
such that $X,$ $Y,$ $P,$ $Q,$ and $R$ have size $b$,
while $T$ has size $a-4b$.
(Note that $a-4b \ge 0$;
if $\frac{a}{b} = 4$, then $T = \varnothing$.)

Let $F_1$ be the graph pictured in the top portion of
Figure~\ref{Fig:diamond2}.
Construct graph $F_2$ as follows:
take two copies of $F_1$,
the first with vertices labeled $1,\dots,9$,
as in Figure~\ref{Fig:diamond2},
the second with vertices similarly labeled $1',\dots,9'$.
Identify vertices $1$ and $1'$,
labeling the resulting vertex as $1$.
Similarly identify vertices $9$ and $9'$,
labeling the resulting vertex as $9$.
Lastly, add an edge joining vertices $8$ and $8'$.
The resulting graph $F_2$
is shown in Figure~\ref{Fig:diamond4}.

Define a color assignment $L$
such that $L(1) = X$ and $L(9) = Y,$
while
vertices $2$ through $8$ are assigned lists of colors
as shown in the bottom portion of
Figure~\ref{Fig:diamond2}.
Let vertices $2',\dots,8'$ be similarly assigned colors
so that vertex $2'$ has the same list as vertex $2$,
vertex $3'$ has the same list as $3$, and so on.

Then graph $F_2$ admits no $b$-fold $L$-coloring.
\end{lemma}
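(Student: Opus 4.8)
The plan is to reduce this to Lemma~\ref{L:planar-piece1} by a short overlap argument, since the real work has already been done in that lemma. First I would suppose, for contradiction, that $\varphi$ is a $b$-fold $L$-coloring of $F_2$. Because $L(1) = X$ and $L(9) = Y$ each have size exactly $b$, the only $b$-element subsets available force $\varphi(1) = X$ and $\varphi(9) = Y$. The crucial structural observation is that the only edge of $F_2$ not lying inside one of the two copies of $F_1$ is the edge joining $8$ and $8'$; all other adjacencies, and all list assignments, are internal to one copy or the other.

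Next I would use this to restrict $\varphi$ to each copy separately. The restriction of $\varphi$ to the vertices $1,\dots,9$ satisfies every edge of the first copy of $F_1$ and respects its lists, so it is a valid $b$-fold $L$-coloring of that copy; likewise the restriction to $1,2',\dots,8',9$ is a valid $b$-fold $L$-coloring of the second copy (whose lists mirror those of the first). Applying Lemma~\ref{L:planar-piece1} to each copy then yields $|\varphi(8)\cap T| > \tfrac{1}{2}|T|$ and, identically, $|\varphi(8')\cap T| > \tfrac{1}{2}|T|$.

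The conclusion follows by a pigeonhole step. Both $\varphi(8)\cap T$ and $\varphi(8')\cap T$ are subsets of $T$, each of size strictly greater than $|T|/2$, so their intersection within $T$ cannot be empty: there is a color lying in $\varphi(8)\cap\varphi(8')\cap T$. But $8$ and $8'$ are adjacent in $F_2$, so $\varphi(8)$ and $\varphi(8')$ must be disjoint---a contradiction. Hence $F_2$ admits no $b$-fold $L$-coloring.

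I do not expect any genuine obstacle here: the difficulty has been entirely absorbed into Lemma~\ref{L:planar-piece1}, and what remains is an elementary counting observation. The one point meriting a sentence of care is the verification that each restriction of $\varphi$ is a legal $L$-coloring of $F_1$---that is, confirming that the added edge $88'$ is the sole interaction between the two copies, so that Lemma~\ref{L:planar-piece1} applies verbatim to both. Once that is noted, the ``more than half of $T$'' conclusions are incompatible with the disjointness demanded by the edge $88'$, which is precisely why the graph was built this way.
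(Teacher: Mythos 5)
Your proposal is correct and matches the paper's proof essentially verbatim: the paper likewise supposes a $b$-fold $L$-coloring of $F_2$ exists, applies Lemma~\ref{L:planar-piece1} to each copy of $F_1$ to get that $\varphi(8)$ and $\varphi(8')$ each contain more than half of $T$, and derives a contradiction with the disjointness forced by the edge $88'$. Your extra sentence verifying that each restriction of $\varphi$ is a legal coloring of its copy of $F_1$ (since $88'$ is the only edge between the copies) is a point the paper leaves implicit, but it is the same argument.
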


\begin{proof}
Suppose that $\varphi$ is a $b$-fold $L$-coloring of $F_2$.
By Lemma~\ref{L:planar-piece1},
more than half of the elements of $T$ lie in $\varphi(8)$.
Similarly,
more than half of the elements of $T$ lie in $\varphi(8')$.
So $\varphi(8)\cap\varphi(8') \ne \varnothing$.
But since vertices $8$ and $8'$ are adjacent,
this is impossible.\ggcendpf\end{proof}

When considering $(a:b)$-choosability,
Lemma~\ref{L:planar-piece2}
says we can forbid a specific
coloring of vertices $1$ and $9$ of $F_2$
with disjoint sets of colors.
Using this idea,
we can paste together copies of $F_2$
to construct a planar graph in which
all possible colorings of two vertices are forbidden.
We use this idea to prove Theorem~\ref{T:planar22-5}.

\begin{proof}[Proof of Theorem~\ref{T:planar22-5}]
Let $a$ and $b$ be positive integers
with $\frac{a}{b} < \frac{22}{5}$.
We construct a planar graph $G$
such that $G$ is not $(a:b)$-choosable.

If $\frac{a}{b} < 4$, then we may let $G = K_4$.
Suppose, therefore, that $4 \le \frac{a}{b} < \frac{22}{5}$.
Let
\[
q = \binom{a}{b} \binom{a-b}{b}.
\]

To construct graph $G,$
begin with $q$ copies of graph $F_2$,
pictured in Figure~\ref{Fig:diamond4}.
Identify all the $1$ vertices in these copies,
labeling the resulting vertex as $1$.
Similarly identify all the $9$ vertices,
labeling the resulting vertex as $9$.
Add an edge joining vertices $1$ and $9$.
Let $G$ be the resulting graph.

Graph $G$ is planar.
It remains to show that $G$ is not $(a:b)$-choosable.

Assign vertices $1$ and $9$ the same list of $a$ colors.
The number of ways these two vertices can be colored with
disjoint sets of size $b$ is $\binom{a}{b}\binom{a-b}{b} = q$.
Create an (arbitrary) correspondence between these colorings
and the $q$ copies of $F_2$.
For each possible coloring of vertices $1$ and $9$,
assign lists of colors to the $2,\dots,8$ and $2',\dots,8'$
vertices in the corresponding copy of $F_2$
so that Lemma~\ref{L:planar-piece2}
allows us to conclude that vertices $1$ and $9$
cannot be colored in this manner.

The result is an assignment $L$ of lists of colors
with $|L(v)| = a$ for all $v\in V(G)$,
such that $G$ admits no $b$-fold $L$-coloring,
since no coloring is possible for vertices $1$ and $9$.
Thus, $G$ is not $(a:b)$-choosable.\ggcendpf\end{proof}

We know that
if $\frac{a}{b}\ge 5$,
then every planar graph is $(a:b)$-choosable
(Theorem~\ref{T:planar5m-m}),
while, if $\frac{a}{b} < \frac{22}{5}$,
then there exists a planar graph
that is not $(a:b)$-choosable
(Theorem~\ref{T:planar22-5}).
From $\frac{22}{5} = 4\frac{2}{5}$ to $5$
is a sizable gap.
Can we close this gap?

\begin{conjecture}
  \label{J:planar-nice-choose}
There exists a real number $r$ such that
the following are equivalent
for positive integers $a$ and $b$.
\begin{enumerate}
\item Every planar graph is $(a:b)$-choosable.
\item $\displaystyle\frac{a}{b}\ge r$.\ggcnopf
\end{enumerate}
\end{conjecture}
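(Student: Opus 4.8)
Because Conjecture~\ref{J:planar-nice-choose} is open rather than a statement with a known proof, what follows is a line of attack rather than a complete argument. The plan is to recast the conjecture as a structural statement about the set
\[
S=\Bigl\{\tfrac{a}{b}:a,b\in\mathbb{Z}^+,\ \text{every planar graph is }(a:b)\text{-choosable}\Bigr\}\subseteq\mathbb{Q}^+.
\]
The conjecture is equivalent to asserting that $S=\{x\in\mathbb{Q}^+:x\ge r\}$ for some real $r$. So the goal splits into (a) showing that $S$ is upward closed (an up-set in $\mathbb{Q}^+$), and (b) showing that its infimum $r$ is \emph{clean}, i.e., either irrational or itself an element of $S$. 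Part (b) is needed because an up-set of the form $\{x>r\}$ with $r$ rational would falsify statement (ii) as written, so the monotonicity alone does not suffice.

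For (a), two monotonicities hold even for a single graph: deleting one color from each list shows $(a:b)$-choosable $\Rightarrow(a{+}1:b)$-choosable, and discarding one color from each vertex's image shows $(a:b)$-choosable $\Rightarrow(a:b{-}1)$-choosable. Together these give closure under $a'\ge a$ and $b'\le b$. To upgrade this to closure under the ratio condition $a'/b'\ge a/b$, I would interpolate through a common denominator: from a class-level scaling $(a:b)\Rightarrow(ab':bb')$, the ``more colors'' step gives $(a'b:b'b)$-choosability (using $a'b\ge ab'$), and a class-level scaling-down $(a'b:b'b)\Rightarrow(a':b')$ finishes, since $bb'=b'b$ and $a',b'$ recover the target ratio. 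Thus (a) reduces to proving, \emph{for the class of planar graphs}, that $(a:b)$-choosability is invariant under scaling $(a,b)\mapsto(ma,mb)$ in both directions.

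This scaling invariance is where I expect the whole difficulty to lie. It is well known that $(a:b)$-choosability is \emph{not} monotone in the ratio for individual graphs---there are graphs that are $(a:b)$-choosable but not $(ma:mb)$-choosable, and conversely---so one cannot argue graph by graph, and the product/blow-up tricks that would force scaling do not preserve planarity. The argument must therefore be genuinely class-level, exploiting planarity itself. I expect the most promising route is to sharpen the two sides until they meet numerically: on the positive side, revisit the Thomassen/Tuza--Voigt induction behind Theorem~\ref{T:planar5m-m}, whose closing step is a local counting inequality, and determine the least ratio for which that inequality still closes uniformly in $b$; on the negative side, optimize the pasted-gadget constructions of Lemma~\ref{L:planar-piece2} and Theorem~\ref{T:planar22-5} to push the non-choosability threshold above $\tfrac{22}{5}$. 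If a single ratio $r$ emerges as both the limit of the inductive method and of the constructions, it becomes the candidate threshold; and the inductive argument, being uniform in $b$, would at the same time deliver the upward closure of (a) and the cleanness of $r$ demanded by (b). Closing the gap $[\tfrac{22}{5},5]$ down to one such $r$ is the crux, and I do not expect it to yield to the gadget-pasting technique of this paper alone.
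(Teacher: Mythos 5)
This statement is a conjecture: the paper contains no proof of it, only the observation that, if it holds, then $\frac{22}{5}\le r\le 5$ (by Theorems~\ref{T:planar5m-m} and \ref{T:planar22-5}) together with speculation that $r$ is $\frac{9}{2}$ or $5$. So there is no paper proof to compare against, and you were right to treat your submission as a plan of attack rather than an argument. Judged as a plan, its skeleton is mostly sound: the two elementary monotonicities ($(a:b)$-choosable implies $(a{+}1:b)$-choosable and $(a:b{-}1)$-choosable) are correct; your interpolation through a common denominator correctly reduces upward closure of the ratio set to class-level scaling invariance $(a:b)\Leftrightarrow(ma:mb)$; and your ``cleanness'' observation is a genuine subtlety that is easy to miss --- since statement (ii) is written with $\ge r$, a threshold that is rational but attained only strictly (choosability exactly when $\frac{a}{b}>r$) would falsify the conjecture as stated, so the infimum must either be achieved or be irrational.

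The gap is that every step carrying mathematical content is deferred. First, your set $S$ is not well defined as a set of ratios unless scaling invariance already holds: under the existential reading of the set-builder notation, ``$S=\{x:x\ge r\}$'' is strictly weaker than the conjecture, which quantifies over all pairs $(a,b)$. This is cosmetic, but it underlines that the class-level scaling invariance you lean on is itself open, and you are correct that it cannot be obtained graph by graph --- a $4$-choosable graph that is not $(8:2)$-choosable is known (Dvo\v{r}\'{a}k, Hu, and Sereni) --- yet you propose no mechanism that exploits planarity to get it. Second, the plan to ``sharpen both sides until they meet'' is precisely what the literature has attempted: the Tuza--Voigt invariant (a precolored edge, lists of length $3m$ on the outer cycle, $5m$ inside, as in Theorem~\ref{T:planar-5m-m}) is calibrated so that the boundary bookkeeping closes exactly at ratio $5$, and no relaxation below $5$ is known (Zhu's $(9:2)$ conjecture is open), while the gadget-pasting lower bound tops out at $\frac{22}{5}$ in this paper. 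Note also that in your own success scenario the detour through parts (a) and (b) is unnecessary: if one proves choosability for all $\frac{a}{b}\ge r$ and constructs counterexamples for all $\frac{a}{b}<r$, that is literally the conjecture, with upward closure and cleanness coming along for free. So there is no logical error in your reductions, but they reformulate the conjecture rather than advance it; the crux --- closing the interval $\bigl[\frac{22}{5},5\bigr]$ --- is untouched, as you candidly acknowledge.
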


If Conjecture~\ref{J:planar-nice-choose} is true,
then $\frac{22}{5} \le r \le 5$,
and it seems likely that
$r$ is either $\frac{9}{2}$ or $5$.
Zhu~\cite[Conjecture~4]{Zhu2017}
conjectured that every planar graph is $(9:2)$-choosable.
If Zhu's conjecture and Conjecture~\ref{J:planar-nice-choose}
both hold, then
$\frac{22}{5} = \frac{44}{10} \le r \le \frac{45}{10} = \frac{9}{2}$,
and most likely $r = \frac{9}{2}$.

As we will see in the next section,
we are able to ``close the gap''
for the larger class of $K_5$-minor-free graphs,
with $r = 5$;
see Theorem~\ref{T:k5mf-5m-m-iff}.

\medskip

What about planar graphs
with higher connectivity?

A number of examples have been found
of planar graphs that are not $4$-choosable.
The examples due to Voigt~\cite{Voi1993},
Gutner~\cite{Gut1996},
and Mirzakhani~\cite{Mir1996}
all have connectivity $3$.
Those due to Zhu~\cite{Zhu2017},
along with the examples constructed in this work,
have connectivity $2$;
however, by the well known fact that every edge-maximal
simple planar graph of order at least $4$ is $3$-connected,
we may add edges to these graphs
to obtain $3$-connected examples
without compromising their properties.

Increasing connectivity to $4$ is a different matter.
All of the examples mentioned are constructed by pasting together
small graphs along edges or triangular faces,
which naturally leads to connectivity at most $3$.
We ask whether examples with greater connectivity exist.

\begin{question}
Does there exist a $4$-connected planar graph
that is not $4$-choosable?\ggcnopf\end{question}

\begin{question}
For each positive integer $m$,
does there exist a $4$-connected planar graph
that is not $(4m:m)$-choosable?
What about ratios greater than $4$?\ggcnopf\end{question}

\section{$K_5$-Minor-Free Graphs} \label{S:k5minfree}

In this section we prove the following theorem.

\begin{theorem}
  \label{T:k5mf-5m-m-iff}
The following are equivalent
for positive integers $a$ and $b$.
\begin{enumerate}
\item Every $K_5$-minor-free graph is $(a:b)$-choosable.
\item $\displaystyle\frac{a}{b}\ge 5$.\ggcnopf
\end{enumerate}
\end{theorem}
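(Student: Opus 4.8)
The plan is to prove Theorem~\ref{T:k5mf-5m-m-iff} by establishing the two implications separately, mirroring the structure of Theorem~\ref{T:planarbip3m-m-iff}. The direction (ii)~$\Longrightarrow$~(i) is the ``positive'' statement: if $\frac{a}{b}\ge 5$, then every $K_5$-minor-free graph is $(a:b)$-choosable. Since $K_5$-minor-free graphs contain all planar graphs, this is the genuinely new content, and it should subsume both Theorem~\ref{T:planar5m-m} (Tuza--Voigt) and Theorem~\ref{T:k5free-5} (\v{S}krekovski) as special cases. The direction (i)~$\Longrightarrow$~(ii) is the ``negative'' statement, and here I expect to reuse the planar examples already at hand: since every planar graph is $K_5$-minor-free, Theorem~\ref{T:planar22-5} immediately handles all ratios below $\frac{22}{5}$, but that is not enough to reach the full range $\frac{a}{b}<5$.

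For the implication (i)~$\Longrightarrow$~(ii), the approach is to exhibit, for every pair $(a,b)$ with $\frac{a}{b}<5$, a $K_5$-minor-free graph that is not $(a:b)$-choosable. The cleanest strategy is to imitate the pasting construction used in the previous two theorems: find a small $K_5$-minor-free ``gadget'' $H$ with two distinguished vertices whose coloring by disjoint $b$-sets can be forbidden by a suitable list assignment whenever $\frac{a}{b}<5$, then take $q=\binom{a}{b}\binom{a-b}{b}$ copies, identify the distinguished vertices across all copies, add an edge between the two resulting vertices, and assign one copy to each of the $q$ possible disjoint colorings. A natural candidate gadget is $K_4$ itself, or a small $K_5$-minor-free graph built from $K_4$'s, since when $\frac{a}{b}<5$ a clique on few vertices already fails to be colorable; the key is to package the failure so that it forbids precisely one coloring of the two terminals. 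I would verify that the glued graph remains $K_5$-minor-free (pasting $K_5$-minor-free pieces along at most two identified vertices preserves this by a standard clique-sum argument) and that no $b$-fold $L$-coloring of the terminals survives.

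The main obstacle will be the positive direction (ii)~$\Longrightarrow$~(i). The known proofs of $5$-choosability for $K_5$-minor-free graphs (\v{S}krekovski, He--Miao--Shen, Wood--Linusson) all rely on the structure theorem that a $K_5$-minor-free graph decomposes by clique-sums of order at most three into planar pieces and copies of the Wagner graph $V_8$. To upgrade $5$-choosability to $(5m:m)$-choosability, and then to $(a:b)$-choosability for all $\frac{a}{b}\ge 5$, I expect to run a Thomassen-style inductive discharging argument on a near-triangulated plane representation of each piece, strengthened to the set-coloring setting where each vertex must receive a $b$-subset of its $a$-list. The delicate points are: carrying the right inductive hypothesis (precolored boundary vertices, with controlled list sizes on the two vertices of the outer face), handling the clique-sum junctions where pieces are glued along triangles so that partial colorings extend consistently, and treating the Wagner-graph pieces explicitly. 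Reducing the general ratio $\frac{a}{b}\ge 5$ to the case $a=5b$ is the other subtlety: one would like a monotonicity argument showing that $(5b:b)$-choosability implies $(a:b)$-choosability whenever $a\ge 5b$, by discarding excess colors or by a direct extension of the inductive bookkeeping.
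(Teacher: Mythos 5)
Your outline for (ii) $\Longrightarrow$ (i) is essentially the paper's argument, with one simplification you missed: the paper's Lemma~\ref{L:k5mf-5m-m} inducts on Wagner's clique-sum decomposition (Theorem~\ref{T:wagner}), extends a precolored $K_2$ or $K_3$ across separating $K_2$s and $K_3$s, colors Wagner-graph pieces greedily (every vertex there has degree $3$), and does \emph{not} re-run a Thomassen-style induction on the planar pieces---it quotes Tuza--Voigt's Theorem~\ref{T:planar-5m-m} directly, after extending a precolored edge to a precolored triangle $x,y,z$ and setting $L'(z)=\varphi(x)\cup\varphi(y)\cup\varphi(z)$, a $3m$-list as that theorem requires. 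Your ``discard excess colors'' reduction is exactly how the paper handles $\frac{a}{b}\ge 5$: take $m=b$ and shrink each remaining list to a $5m$-element subset.

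The genuine gap is in (i) $\Longrightarrow$ (ii) on the critical range $4\le\frac{a}{b}<5$, where you have no gadget, and the family you propose provably cannot supply one. $K_4$ \emph{is} $(a:b)$-choosable whenever $a\ge 4b$ (a clique $K_n$ fails only when $a<nb$, and $K_5$ is off-limits), and any graph built by pasting $K_4$'s along cliques has treewidth at most $3$, hence is $3$-degenerate, hence $(a:b)$-choosable for all $a\ge 4b$ by the greedy argument (cf.\ Theorem~\ref{T:k234mf-iff} and Tuza--Voigt~\cite[Thm.~2.1]{TuVo1996}); so ``a small $K_5$-minor-free graph built from $K_4$'s'' can never fail where you need it to. The paper's key idea, absent from your proposal, is to use \emph{three} terminals rather than two: the gadget is the octahedron $N_8\cong K_{2,2,2}$ (Lemma~\ref{L:octa}), where forcing $\varphi(1)=X$, $\varphi(2)=Y$, $\varphi(3)=Z$ on one triangle leaves the antipodal triangle $4,5,6$---three pairwise adjacent vertices whose size-$a$ lists are of the form $\mathit{YZPQT}$, $\mathit{XZPQT}$, $\mathit{XYPQT}$---needing $3b$ colors from the pool $\mathit{PQT}$ of size $a-2b<3b$. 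One then pastes $q=\binom{a}{b}\binom{a-b}{b}\binom{a-2b}{b}$ copies along the common triangle $\{1,2,3\}$, one copy per possible coloring of the three terminals; pasting along triangles preserves $K_5$-minor-freeness, though the result is no longer planar once $q\ge 3$---and it cannot be, since the paper's planar constructions (Theorem~\ref{T:planar22-5}) stop at $\frac{22}{5}$ and no planar example is known in $[\frac{22}{5},5)$. A two-terminal frame is not intrinsically doomed (one could group the paper's copies by the coloring of vertex $3$ to manufacture a two-terminal gadget), but without a concrete gadget and a verified forbidding list assignment, your negative direction does not go through.
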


Our proof uses a method similar to the proof of
Theorem~\ref{T:k5free-5}
by He, Miao, \& Shen~\cite[Thm.~2.1]{HMS2008},
along with examples based on a construction of
Bar\'{a}t, Joret, \& Wood~\cite[proof of Thm.~1]{BJW2011}.

We will make use of two previously known results.
The first, due to Tuza \& Voigt~\cite[Thm.~3.2]{TuVo1996},
deals with list multicoloring of planar graphs.

\begin{theorem}[Tuza \& Voigt 1996]
  \label{T:planar-5m-m}
Let $m$ be a positive integer.
Let $G$ be a plane near triangulation
with outer cycle $C.$
Suppose that $L$ is a list assignment for $G,$
with the following properties.
\begin{enumerate}
\item There are two adjacent vertices $u$ and $v$ of $C$
with disjoint lists of length $m$ each.
\item All the other vertices of $C$ have
(unrestricted)
lists of length $3m$.
\item All vertices of $G-C$ have lists of length $5m$.
\end{enumerate}
Then $G$ admits an $m$-fold $L$-coloring.\ggcnopf
\end{theorem}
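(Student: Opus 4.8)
The plan is to prove the statement by adapting Thomassen's inductive proof of the $5$-choosability of planar graphs~\cite{Tho1994} to the multicoloring setting, replacing his ``one color'' and ``two colors'' throughout by ``$m$ colors'' and ``$2m$ colors.'' First I would set up an induction on $|V(G)|$, proving the slightly more flexible statement in which the two special adjacent vertices $u,v$ carry disjoint lists of size \emph{exactly} $m$ (so their only $m$-fold $L$-coloring uses the full list, and disjointness of colors is then automatic), every other vertex of the outer cycle $C$ has a list of size \emph{at least} $3m$, and every interior vertex a list of size \emph{at least} $5m$. This $\ge$ formulation is what the induction actually produces when lists get shortened, and it specializes to the stated equalities. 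Writing $C = v_1 v_2 \cdots v_p$ with $u = v_1$ and $v = v_2$, the base case is the single edge $v_1 v_2$ (equivalently a triangle with empty interior), which is colored by assigning each special vertex its full list.

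For the inductive step I would split into the two standard cases. If $C$ has a chord $v_i v_j$, it divides $G$ into two near-triangulations $G_1$ and $G_2$ sharing the edge $v_i v_j$; I would arrange that the precolored edge $v_1 v_2$ lies in $G_1$ (possible since a chord joins non-consecutive vertices, so $v_1 v_2$ falls on one side). Applying the induction hypothesis to $G_1$ colors $v_i$ and $v_j$ with disjoint $m$-sets $\varphi(v_i),\varphi(v_j)$ (disjoint because they are adjacent). Restricting the lists of $v_i,v_j$ in $G_2$ to these two $m$-sets makes them a legitimate special pair for $G_2$, to which the induction hypothesis applies again; the two colorings agree on the shared vertices, yielding an $m$-fold coloring of $G$.

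If $C$ has no chord, let $v_p$ (the neighbor of $v_1$ on $C$ other than $v_2$) have neighbors $v_1 = u_0, u_1, \dots, u_s, v_{p-1} = u_{s+1}$ in order, where $u_1, \dots, u_s$ are interior vertices — they are interior precisely because there is no chord. Since $|L(v_p)| \ge 3m$ and $|\varphi(v_1)| = m$, I would choose a reservoir $S \subseteq L(v_p) \setminus \varphi(v_1)$ with $|S| = 2m$ and delete $S$ from the list of each $u_i$; this leaves each $u_i$ with at least $5m - 2m = 3m$ colors, exactly what it needs as a new boundary vertex. Deleting $v_p$ then gives a near-triangulation with outer cycle $v_1 u_1 \cdots u_s v_{p-1} \cdots v_2$ still satisfying the hypotheses, so the induction hypothesis colors it; in particular $v_{p-1}$ receives an $m$-set $\varphi(v_{p-1})$. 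Finally I would color $v_p$ with any $m$-subset of $S \setminus \varphi(v_{p-1})$, which exists because $|S \setminus \varphi(v_{p-1})| \ge 2m - m = m$. This set is disjoint from $\varphi(v_1)$ (as $S \cap \varphi(v_1) = \varnothing$), from $\varphi(v_{p-1})$ by construction, and from every $\varphi(u_i)$ (since $S$ was removed from each $u_i$'s list); as $v_1, u_1, \dots, u_s, v_{p-1}$ are all the neighbors of $v_p$, the coloring is valid.

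The arithmetic is the engine here, and each inequality is just tight enough to respect the exact thresholds $m$, $3m$, $5m$. I expect the main obstacle to be not the counting but the structural bookkeeping: verifying that the no-chord deletion really produces a near-triangulation bounded by a genuine simple cycle — which is exactly where the absence of a chord is used, since any neighbor of $v_p$ lying on $C$ would itself be a chord — and checking in the chord case that the special pair $v_1,v_2$ lies in a single part while the induced special pair in the other part remains legitimate even when a chord endpoint coincides with $v_1$ or $v_2$. Once the induction hypothesis is stated in the $\ge$ form above, both cases preserve it and the induction closes.
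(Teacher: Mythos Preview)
Your argument is correct and is precisely the Tuza--Voigt generalization of Thomassen's proof. Note, however, that the paper does not supply its own proof of this theorem: it is quoted as a previously known result of Tuza and Voigt~\cite[Thm.~3.2]{TuVo1996} and used as a black box in the proof of Lemma~\ref{L:k5mf-5m-m}, so there is nothing in the paper to compare your proposal against beyond the citation itself.
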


The second previously known result
is a structure theorem for $K_5$-minor-free graphs
proven by Wagner~\cite{Wag1937}
and often called Wagner's Theorem.
As Wagner used terminology very different from ours,
we give a statement based on that
of Diestel~\cite[Thm.~8.3.4]{Die2000}.

\begin{figure}  
\centering
\includegraphics{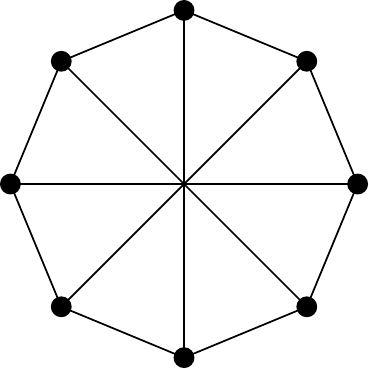}
\caption{Graph $M_8$, the $8$-vertex M\"{o}bius ladder,
also known as the Wagner Graph,
used in Theorem~\ref{T:wagner}
and the proof of Lemma~\ref{L:k5mf-5m-m}.}
\label{Fig:m8}
\end{figure}

\begin{theorem}[Wagner 1937]
  \label{T:wagner}
Let $G$
be an edge-maximal $K_5$-minor-free graph.
If $|V(G)|\ge 4$,
then $G$ can be constructed recursively,
by pasting along triangles and $K_2$s,
from plane triangulations and copies of the graph $M_8$---the
$8$-vertex M\"{o}bius ladder
or Wagner Graph,
shown in Figure~\ref{Fig:m8}.\ggcnopf
\end{theorem}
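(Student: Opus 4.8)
The plan is to prove Wagner's theorem by induction on $|V(G)|$, repeatedly splitting $G$ as a clique-sum along a separating clique of size at most $3$ until the pieces are ``irreducible,'' and then showing that the irreducible pieces are exactly plane triangulations and copies of $M_8$. The engine of the reduction is the observation that $K_5$ is $4$-connected: if $G$ is the clique-sum $G_1 \oplus_S G_2$ of two proper subgraphs along a complete separator $S$ with $|S|\le 3$, then any $K_5$ minor of $G$, being $4$-connected, cannot be separated by the fewer-than-four vertices of $S$ and so lies entirely inside $G_1$ or inside $G_2$. Hence $G$ is $K_5$-minor-free iff both $G_1$ and $G_2$ are, and any edge addable within some $G_i$ while keeping it $K_5$-minor-free would be addable within $G$; thus each $G_i$ inherits both $K_5$-minor-freeness and edge-maximality, and the induction hypothesis applies to them. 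For the base case, an edge-maximal $K_5$-minor-free graph on exactly $4$ vertices is $K_4$, a plane triangulation.

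For the inductive step I would first note that edge-maximality forces $G$ to be connected (a $K_5$ minor, being connected, lives in one component, so an edge between two components could always be added). Next I would show that every minimal separator of $G$ of size at most $2$ induces a clique: if $x,y$ lay in such a separator with $xy\notin E(G)$, then by edge-maximality $G+xy$ contains a $K_5$ minor using $xy$, and a standard rerouting argument---replacing $xy$ by an $x$--$y$ path through a side of the separation that the remaining branch sets cannot block, since none of them can contain a vertex of the $2$-element separator---produces a $K_5$ minor already in $G$, a contradiction. Consequently any separator of size at most $2$ is a complete separating clique, which I split along and then recurse. After this, $G$ may be assumed $3$-connected. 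If $G$ additionally has a separating triangle (a complete $3$-separator), I split along it and recurse likewise. What remains is the case in which $G$ is $3$-connected with no separating triangle.

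In this remaining case, if $G$ is planar then edge-maximality forces it to be a \emph{plane triangulation}: were some non-edge addable while preserving planarity, that edge would also preserve $K_5$-minor-freeness, contradicting edge-maximality, so $G$ is maximal planar. If instead $G$ is non-planar, then by the Kuratowski--Wagner theorem $G$ contains a subdivision of $K_5$ or of $K_{3,3}$; the former yields a $K_5$ minor, so $G$ contains a subdivision of $K_{3,3}$. The goal is then to show $G\cong M_8$. I would fix such a subdivision and use the three hypotheses in combination: $3$-connectivity and the absence of a separating triangle pin down how the branch paths and the remaining vertices of $G$ may attach to the subdivision, while edge-maximality (every non-edge completes a $K_5$ minor) together with $K_5$-minor-freeness forbids any attachment that would create a ``fifth branch vertex'' adjacent to four others. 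Pushing this analysis through forces the attachment pattern to be precisely the one realized by the Wagner graph, giving $G\cong M_8$.

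I expect this last step---identifying $M_8$ in the $3$-connected, non-planar, separating-triangle-free case---to be the main obstacle. The clique-sum reductions are essentially formal once the $4$-connectivity of $K_5$ is exploited, and the planar subcase is immediate from edge-maximality; but the non-planar subcase is a genuinely combinatorial rigidity statement. It is exactly here that $M_8$ must surface rather than being decomposed away: since $M_8$ is triangle-free, its (necessarily incomplete) $3$-separators are \emph{not} separating cliques and cannot drive a clique-sum, which is why the earlier reductions never reach inside it and why a direct structural identification is unavoidable.
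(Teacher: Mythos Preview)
The paper does not contain a proof of this statement. Wagner's Theorem is quoted as a previously known result, attributed to Wagner~(1937) with phrasing taken from Diestel, and is invoked only as a black box in the proof of Lemma~\ref{L:k5mf-5m-m}; the trailing no-proof marker indicates that no argument is supplied. There is therefore nothing in the paper to compare your proposal against.

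As an independent assessment: your overall plan---reduce by clique-sums over complete separators of size at most $3$, then analyse the irreducible $3$-connected pieces---is the standard architecture, and your treatment of the reduction steps and of the planar subcase is essentially correct. The substance of Wagner's Theorem, however, is precisely the step you yourself flag as the ``main obstacle'': showing that a $3$-connected, non-planar, $K_5$-minor-free graph must be isomorphic to $M_8$. Your final paragraph does not prove this; the phrase ``pushing this analysis through forces the attachment pattern to be precisely the one realized by the Wagner graph'' stands in for what, in standard references (e.g., Diestel, \S8.3), is a multi-page case analysis of how a $K_{3,3}$ subdivision sits inside $G$ and how any additional vertex or edge would produce a $K_5$ minor. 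So your proposal is a correct outline of the classical proof, but the hard case remains a promissory note rather than an argument.
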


We begin with the following lemma,
which generalizes
a lemma of \v{S}krekovski~\cite[Lemma~2.2]{Skr1998}
and of
He, Miao, \& Shen~\cite[Lemma~2.1]{HMS2008}
to list multicoloring.
Our proof follows similar lines to that of
He, Miao, \& Shen.

\begin{lemma}
  \label{L:k5mf-5m-m}
Let $m$ be a positive integer.
Let $G$ be an edge-maximal $K_5$-minor-free graph,
and let $L$ be a list assignment for $G$ such that
$|L(v)|\ge 5m$ for each $v\in V(G)$.
Suppose that $H$ is a subgraph of $G$ isomorphic to $K_2$ or $K_3$,
and $\varphi$ is an $m$-fold $L$-coloring of $H.$
Then $\varphi$ can be extended to an $m$-fold $L$-coloring of $G.$
\end{lemma}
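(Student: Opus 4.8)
The plan is to prove Lemma~\ref{L:k5mf-5m-m} by induction on $|V(G)|$, using Wagner's structure theorem (Theorem~\ref{T:wagner}) to decompose $G$ and the Tuza--Voigt multicoloring result (Theorem~\ref{T:planar-5m-m}) to handle the planar pieces. The base case is when $|V(G)|$ is small enough that $G$ is itself a plane triangulation or a copy of $M_8$; in the former case the extension follows from Theorem~\ref{T:planar-5m-m}, and in the latter case I would need to argue directly that a precolored $K_2$ or $K_3$ in $M_8$ extends to a $5m$-fold coloring (see below). For the inductive step, Theorem~\ref{T:wagner} tells us that $G$ is obtained by pasting a ``last'' piece $G_1$ (a plane triangulation or $M_8$) onto a smaller edge-maximal $K_5$-minor-free graph $G_2$ along a shared $K_2$ or $K_3$, which I will call the \emph{gluing clique} $S$.

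First I would locate the precolored subgraph $H$ relative to this decomposition. The natural strategy is to arrange the induction so that $H$ lies in $G_2$ (the larger, recursively-built side), and then to extend the coloring outward to the attached piece $G_1$. Concretely, I would first apply the induction hypothesis to $G_2$ (which is itself edge-maximal $K_5$-minor-free and smaller) to extend $\varphi$ from $H$ to an $m$-fold $L$-coloring $\psi$ of all of $G_2$. This in particular colors every vertex of the gluing clique $S$. Since $S$ is a $K_2$ or $K_3$, its coloring under $\psi$ is an $m$-fold $L$-coloring of the copy of $S$ sitting inside $G_1$. I would then extend this coloring of $S$ to all of $G_1$: if $G_1$ is a plane triangulation I invoke Theorem~\ref{T:planar-5m-m} (taking the two adjacent vertices with disjoint length-$m$ lists to be an edge of $S$, whose colors are already fixed, so effectively their ``lists'' are the assigned color sets of size $m$), and if $G_1$ is $M_8$ I use the base-case analysis of $M_8$. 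Because $G_1$ and $G_2$ share only the clique $S$ and the colorings agree on $S$, the two partial colorings combine into an $m$-fold $L$-coloring of all of $G$.

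There are two delicate points that I expect to be the real work. The first, and the main obstacle, is the $M_8$ case: I must show that any $m$-fold coloring of a $K_2$ or $K_3$ inside $M_8$, with all lists of size $\ge 5m$, extends to the whole graph. Since $M_8$ has only $8$ vertices and is vertex-transitive with a clear symmetry structure, I would handle this by a direct greedy/counting argument, ordering the uncolored vertices and checking that each has enough available colors (at least $m$ beyond what its already-colored neighbors forbid) at the moment it is colored; the worst case is a vertex with the most precolored neighbors, and I would verify the $5m$ bound suffices there. The second delicate point is matching the hypotheses of Theorem~\ref{T:planar-5m-m}: that theorem wants two \emph{adjacent} vertices of the outer cycle $C$ with disjoint length-$m$ lists, so I must ensure the gluing clique $S$ lies on the outer face of the plane triangulation $G_1$ and contains such an edge. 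When $S$ is an edge this is immediate; when $S$ is a triangle I would treat it as bounding the outer face and note that fixing all three vertices of $S$ is at least as strong as the theorem's hypothesis (two fixed adjacent vertices plus one vertex with a length-$3m$ list), so the extension still goes through. Finally, I would need to check the bookkeeping that the pasting genuinely produces edge-maximal $K_5$-minor-free pieces of strictly smaller order so that the induction is well-founded.
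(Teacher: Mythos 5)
Your overall architecture (induction plus Wagner's theorem, greedy coloring on $M_8$, Tuza--Voigt on the planar pieces) matches the paper's, but there is a genuine gap at the crucial step, and it is exactly where you wave your hands: extending a \emph{fully prescribed} coloring of a triangle $S=\{x,y,z\}$ into a plane triangulation via Theorem~\ref{T:planar-5m-m}. You argue that ``fixing all three vertices of $S$ is at least as strong as the theorem's hypothesis \dots\ so the extension still goes through,'' but this reasoning runs in the wrong direction. Fixing three vertices is a \emph{stronger demand on the coloring}, not a weaker hypothesis you get to assume: the theorem only guarantees that the third outer vertex $z$ receives \emph{some} $m$-subset of a list of length $3m$; it gives you no control over which one, so nothing forces $z$ to be colored with the prescribed set $\varphi(z)$. (And you cannot simply set $L'(z)=\varphi(z)$, since the theorem requires the non-distinguished outer vertices to have lists of length $3m$, not $m$.) The paper closes this with a small but essential trick: set $L'(x)=\varphi(x)$, $L'(y)=\varphi(y)$, and $L'(z)=\varphi(x)\cup\varphi(y)\cup\varphi(z)$, a list of length exactly $3m$. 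Since $z$ is adjacent to $x$ and $y$, any $m$-fold $L'$-coloring must color $z$ with a set disjoint from $\varphi(x)$ and $\varphi(y)$, hence contained in, and so equal to, $\varphi(z)$. Without this device (or an equivalent one) your triangle case fails, and the triangle case is unavoidable because Wagner's construction pastes along triangles.

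A secondary issue is your asymmetric decomposition strategy, which takes on structural burdens the paper avoids. You want to ``arrange'' that the precolored $H$ lies in the recursively built side $G_2$ and then extend outward into a leaf piece $G_1$; this requires re-rooting the clique-sum tree and, more seriously, requires the gluing clique $S$ to bound a face of the leaf triangulation $G_1$ -- but Wagner's theorem does not guarantee that pasting triangles are facial in their pieces (a plane triangulation may have separating triangles), so ``treat it as bounding the outer face'' can fail without further refinement of the decomposition. The paper's induction is symmetric and sidesteps all of this: if $G$ has \emph{any} separating $K_2$ or $K_3$, it applies the induction hypothesis to the side containing $H$ first and then to the other side with the separator precolored (adding edges to restore edge-maximality), so Theorem~\ref{T:planar-5m-m} is only ever invoked when $G$ itself has no separating $K_2$ or $K_3$ -- in which case the triangle $H$ is automatically facial and may be taken as the outer face. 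Your $M_8$ case and the reduction from $K_2$ to $K_3$ (color a third vertex of a triangle containing the precolored edge, avoiding its at most $2m$ used colors) are fine and agree with the paper.
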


\begin{proof}
We proceed by induction on $|V(G)|$.
The result is immediate when $|V(G)|\le 3$.
Suppose $|V(G)|\ge 4$.

First suppose $G$ has a separating $K_2$ or $K_3$.
We may write $G = G_1\cup G_2$,
where $G_1\cap G_2$ is $K_2$ or $K_3$.
Then $H$ must be contained in either $G_1$ or $G_2$;
without loss of generality say $G_1$.
Apply the induction hypothesis to $G_1$,
with $H$ precolored by $\varphi$---adding
edges to $G_1$ as necessary to make it
edge-maximal $K_5$-minor-free.
Using the resulting multicoloring, precolor $G_1\cap G_2$
and apply the induction hypothesis to $G_2$---once
again adding edges as necessary.
Pasting the two resulting multicolorings together
gives the required $m$-fold $L$-coloring of $G.$

Now suppose $G\cong M_8$, the Wagner Graph;
see Figure~\ref{Fig:m8}.
Then the vertices of $G-H$ may be colored from their
lists in any order,
since every vertex in $G$ has degree $3$.

We are left with the case in which
$G$ is not the Wagner Graph
and has no separating $K_2$ or $K_3$.
By Theorem~\ref{T:wagner},
$G$ is a plane triangulation.
We may assume that $H$ is $K_3$;
otherwise choose a vertex that lies in a triangle
containing $H,$
color this vertex using colors in its list
that are not used on either vertex of $H,$
and add the new vertex to $H.$

Since $H$ is a triangle and is not separating,
we can
embed $G$ in the plane so that $H$ is the outer face.
Denote the vertices of $H$ by $x$, $y$, $z$.
Define a new list assignment $L'$ for $G$ as follows.
Let $L'(x) = \varphi(x)$,
$L'(y) = \varphi(y)$,
and
$L'(z) = \varphi(x)\cup\varphi(y)\cup\varphi(z)$.
For each vertex $v\in V(G)-\{x,y,z\}$,
let $L'(v)$ be a $5m$-element subset of $L(v)$.
Apply Theorem~\ref{T:planar-5m-m} to
obtain an $m$-fold $L'$-coloring of $G$
that extends $\varphi$.
This is the required $m$-fold $L$-coloring of $G.$\ggcendpf
\end{proof}

Next we prove a list-coloring property of the octahedron graph.

\begin{figure}  
\centering
\includegraphics{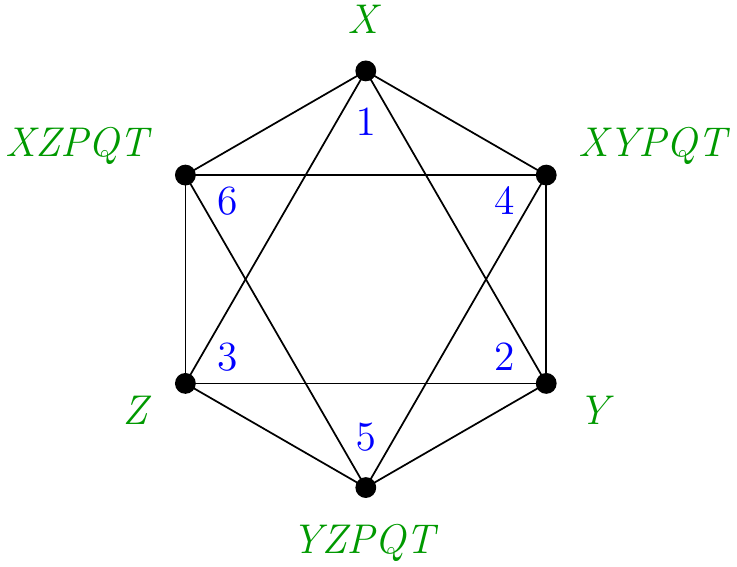}
\caption{Graph $N_8\cong K_{2,2,2}$, the octahedron graph,
with vertices labeled
and lists of colors shown,
used in Lemma~\ref{L:octa}
and the proof of Theorem~\ref{T:k5mf-5m-m-iff}.}
\label{Fig:octa}
\end{figure}

\begin{lemma}
  \label{L:octa}
Let $a$ and $b$ be positive integers with
$4\le \frac{a}{b} < 5$.
Let $X,$ $Y,$ $Z,$ $P,$ $Q,$ and $T$ be pairwise disjoint
lists of colors,
such that $X,$ $Y,$ $Z,$ $P,$ and $Q$ have size $b$,
while $T$ has size $a-4b$.
(Note that $a-4b \ge 0$;
if $\frac{a}{b} = 4$, then $T = \varnothing$.)

Let $N_8\cong K_{2,2,2}$ be the octahedron graph
with list assignment $L$ shown in Figure~\ref{Fig:octa}.

Then graph $N_8$ admits no $b$-fold $L$-coloring.
\end{lemma}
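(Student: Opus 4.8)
The plan is to assume, for contradiction, that a $b$-fold $L$-coloring $\varphi$ of $N_8$ exists, and to exploit the fact that the octahedron $K_{2,2,2}$ decomposes into two vertex-disjoint triangles (a pair of opposite faces) $\{u_1,u_2,u_3\}$ and $\{w_1,w_2,w_3\}$, in which each $w_i$ is non-adjacent to exactly one $u$-vertex, namely $u_i$, and adjacent to the other two. I read the assignment in Figure~\ref{Fig:octa} as giving the three vertices of one triangle the singleton lists $X$, $Y$, $Z$, so that (each list having size exactly $b$) we are forced to take $\varphi(u_1)=X$, $\varphi(u_2)=Y$, $\varphi(u_3)=Z$. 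These three sets are pairwise disjoint, which is consistent with $\{u_1,u_2,u_3\}$ being a properly colored triangle.

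First I would record, for each vertex $w_i$ of the opposite triangle, which colors its already-colored neighbors forbid. Since $w_i$ is adjacent to exactly the two vertices $u_j$ with $j\ne i$, its color must avoid the corresponding two of $X,Y,Z$. The point of the figure's lists is that, once those two classes are deleted, the remaining usable colors for every $w_i$ are exactly those of $P\cup Q\cup T$: each $w_i$ is the one vertex allowed to reuse the class $\varphi(u_i)$, but the figure omits that class from $L(w_i)$, so nothing outside $P\cup Q\cup T$ survives. Thus each of $\varphi(w_1),\varphi(w_2),\varphi(w_3)$ is a $b$-subset of $P\cup Q\cup T$.

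The contradiction is then a one-line counting step. The vertices $w_1,w_2,w_3$ are pairwise adjacent, so $\varphi(w_1),\varphi(w_2),\varphi(w_3)$ are pairwise disjoint, and therefore
\[
3b=|\varphi(w_1)|+|\varphi(w_2)|+|\varphi(w_3)|=|\varphi(w_1)\cup\varphi(w_2)\cup\varphi(w_3)|\le|P\cup Q\cup T|=a-2b.
\]
But $a-2b<3b$ precisely because $\frac{a}{b}<5$, so $3b\le a-2b<3b$ is impossible, completing the proof.

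I do not expect a serious obstacle: once the two-opposite-triangles structure is seen, the argument is short, and it turns entirely on the single inequality $a-2b<3b$, i.e.\ $\frac{a}{b}<5$, which is exactly why the octahedron is the natural gadget at the threshold $5$. The only point requiring care is the adjacency bookkeeping in the middle step—verifying that each vertex of the second triangle meets precisely two of the forced vertices and hence loses exactly two of the classes $X,Y,Z$, leaving the common pool $P\cup Q\cup T$ of size $a-2b$. The hypothesis $\frac{a}{b}\ge4$ plays no role in the obstruction; it is needed only so that $|T|=a-4b\ge0$ and the assignment is well defined.
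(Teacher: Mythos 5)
Your proposal is correct and is essentially the paper's own proof: the forced coloring $\varphi(1)=X$, $\varphi(2)=Y$, $\varphi(3)=Z$ on one triangle, followed by the observation that the opposite triangle's three pairwise disjoint $b$-sets must fit inside $P\cup Q\cup T$, whose size $a-2b$ is less than $3b$ exactly when $\frac{a}{b}<5$. Your reconstruction of the figure's lists (each vertex of the second triangle carrying the two classes of its forced neighbors plus $P\cup Q\cup T$) matches the paper, and your closing remark that $\frac{a}{b}\ge 4$ serves only to make $|T|=a-4b\ge 0$ is also accurate.
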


\begin{proof}
In a $b$-fold $L$-coloring $\varphi$ of $N_8$,
we must have $\varphi(1) = X,$
$\varphi(2) = Y,$
and $\varphi(3) = Z.$
So $\varphi(4)$, $\varphi(5)$, and $\varphi(6)$
are pairwise disjoint subsets of $\mathit{PQT},$
each of size $b$,
and thus $|\varphi(4)\cup\varphi(5)\cup\varphi(6)| = 3b$.
However, $|\mathit{PQT}| < 3b$,
so no such coloring can exist.\ggcendpf
\end{proof}

When considering $(a:b)$-choosability,
Lemma~\ref{L:octa}
says we can forbid a specific
coloring of vertices $1$, $2$, and $3$ of the octahedron graph $N_8$
with pairwise disjoint sets of colors.
Using this idea,
we can paste together copies of $N_8$
to construct a $K_5$-minor-free graph in which
all possible colorings of three vertices are forbidden.
We use this idea in the proof of Theorem~\ref{T:k5mf-5m-m-iff}.

\begin{proof}[Proof of Theorem~\ref{T:k5mf-5m-m-iff}]
(ii) $\Longrightarrow$ (i).
Let $a$ and $b$ be positive integers with $\frac{a}{b} \ge 5$.
Let $G$ be a $K_5$-minor-free graph.
We may assume that $G$ is edge-maximal $K_5$-minor-free;
otherwise add edges until it is.
Let $L$ be a list assignment of $G$ such that
$|L(v)| = a$ for each $v\in V(G)$.
It suffices to show that $G$ admits a $b$-fold $L$-coloring.

If $|V(G)| < 2$, then the result is immediate.
If $|V(G)| \ge 2$,
then let $H$ be the subgraph induced
by two (arbitrary) adjacent vertices.
Let $m = b$,
and let $\varphi$ be an $m$-fold $L$-coloring of $H.$
Applying Lemma~\ref{L:k5mf-5m-m},
we obtain the required $b$-fold $L$-coloring of $G.$

\medskip
\noindent
(i) $\Longrightarrow$ (ii).
Let $a$ and $b$ be positive integers
with $\frac{a}{b} < 5$.
We construct a $K_5$-minor-free graph $G$
such that $G$ is not $(a:b)$-choosable.

If $\frac{a}{b} < 4$, then we may let $G = K_4$.
Suppose, therefore, that $4\le \frac{a}{b} < 5$.
Let
\[
q = \binom{a}{b} \binom{a-b}{b} \binom{a-2b}{b}.
\]

To construct graph $G$,
begin with $q$ copies of the octahedron graph $N_8$,
pictured in Figure~\ref{Fig:octa}.
In each copy, there is a triangle having vertices
labeled $1$, $2$, $3$.
Paste along all of these triangles to obtain $G,$
so that $G$ has just one vertex labeled $1$,
and similarly for $2$ and $3$.
See Figure~\ref{Fig:multiocta} for an illustration
of graph $G.$
(The construction of graph $G$
is a variation on a construction of
Bar\'{a}t, Joret, \& Wood~\cite[proof of Thm.~1]{BJW2011}.)

Graph $G$ has no $K_5$ minor.
It remains to show that $G$ is not $(a:b)$-choosable.

Assign vertices $1$, $2$, and $3$ the same list of $a$ colors.
The number of ways these three vertices can be colored with
pairwise disjoint sets of size $b$ is
$\binom{a}{b}\binom{a-b}{b}\binom{a-2b}{b} = q$.
Create an (arbitrary) correspondence between these colorings
and the $q$ copies of $N_8$.
For each possible coloring of vertices $1$, $2$, $3$,
assign lists of colors to the other $3$ vertices
of the corresponding copy of $N_8$
so that Lemma~\ref{L:octa} allows us to conclude
that vertices $1$, $2$, $3$ cannot be colored in this manner.

The result is an assignment $L$ of lists of colors
with $|L(v)| = a$ for all $v\in V(G)$,
such that $G$ admits no $b$-fold $L$-coloring,
since no coloring is possible for vertices $1$, $2$, $3$.
Thus, $G$ is not $(a:b)$-choosable.\ggcendpf\end{proof}

\begin{figure}  
\centering
\includegraphics{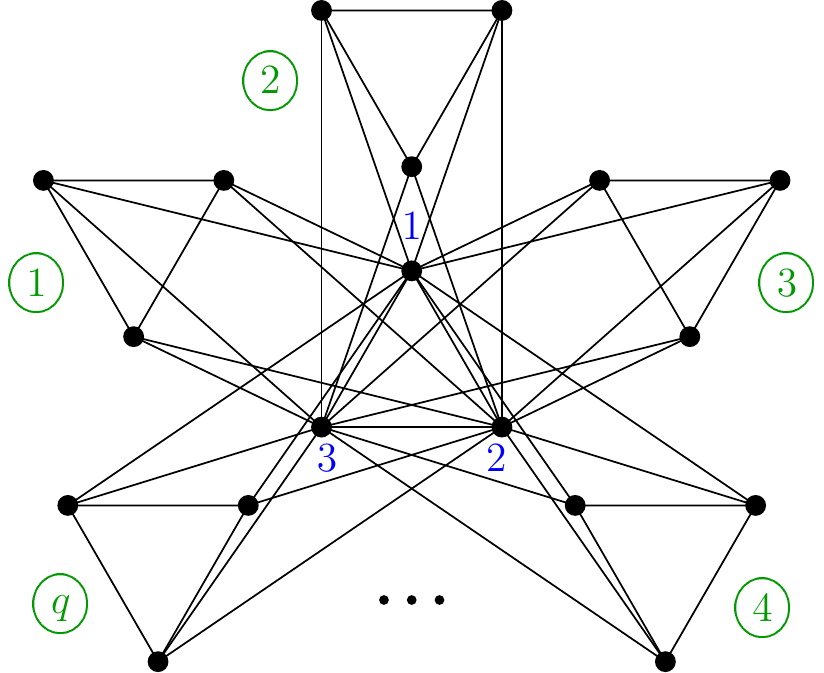}
\caption{Graph $G$ from the (i) $\Longrightarrow$ (ii) part
of the proof of Theorem~\ref{T:k5mf-5m-m-iff}:
$q$ copies of the octahedron graph $N_8$
(see Figure~\ref{Fig:octa})
pasted together,
with the $1,2,3$ triangles identified.
Each copy of $N_8$
is labeled with a circled number from $1$ to $q$.}
\label{Fig:multiocta}
\end{figure}

What about forbidding clique minors of orders other than $5$?
Results similar to Theorem~\ref{T:k5mf-5m-m-iff}
can easily be proven for smaller clique minors.

\begin{theorem}
  \label{T:k234mf-iff}
Let $t$ be a positive integer with $2\le t\le 4$.
Then the following are equivalent
for positive integers $a$ and $b$.
\begin{enumerate}
\item Every $K_t$-minor-free graph is $(a:b)$-choosable.
\item $\displaystyle\frac{a}{b}\ge t-1$.\ggcnopf
\end{enumerate}
\end{theorem}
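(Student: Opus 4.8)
The plan is to prove the two implications separately, mirroring the structure of the proof of Theorem~\ref{T:k5mf-5m-m-iff} but exploiting the fact that for $2\le t\le 4$ the class of $K_t$-minor-free graphs has a very simple structure. For the positive direction I will use degeneracy; for the negative direction I will use a single tiny example.

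For (ii) $\Longrightarrow$ (i), I would rely on the classical observation that, for $2\le t\le 4$, every $K_t$-minor-free graph is $(t-2)$-degenerate. Indeed, a $K_2$-minor-free graph is edgeless, a $K_3$-minor-free graph is a forest, and a $K_4$-minor-free graph is series-parallel (of treewidth at most $2$); in each case every subgraph contains a vertex of degree at most $t-2$, since the property of being $K_t$-minor-free is hereditary. Given a list assignment $L$ with $|L(v)|=a\ge (t-1)b$ for all $v$, I would order the vertices so that each has at most $t-2$ neighbors preceding it, and color greedily in that order: when a vertex is reached, its already-colored neighbors occupy at most $(t-2)b$ colors, leaving at least $a-(t-2)b\ge b$ colors of its list available, from which $b$ colors may be chosen. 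This produces a $b$-fold $L$-coloring, so every $K_t$-minor-free graph is $(a:b)$-choosable whenever $\frac{a}{b}\ge t-1$.

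For (i) $\Longrightarrow$ (ii), I would argue the contrapositive by exhibiting one small example. Suppose $\frac{a}{b}<t-1$, equivalently $a<(t-1)b$. The complete graph $K_{t-1}$ has only $t-1$ vertices and hence contains no $K_t$ minor, so it is $K_t$-minor-free. Assign every vertex of $K_{t-1}$ the same list $S$ of $a$ colors. Any $b$-fold $L$-coloring would color the $t-1$ pairwise adjacent vertices with pairwise disjoint $b$-element subsets of $S$, requiring $|S|\ge (t-1)b>a$, which is impossible. Thus $K_{t-1}$ is not $(a:b)$-choosable, and so not every $K_t$-minor-free graph is $(a:b)$-choosable.

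There is little genuine difficulty here; the result is routine, as the surrounding text indicates. The only point requiring care is the appeal in (ii) $\Longrightarrow$ (i) to the structural characterizations underlying $(t-2)$-degeneracy, in particular the series-parallel description of $K_4$-minor-free graphs for the case $t=4$. (For $t=4$ one could instead recycle the $K_4$-minor-free example implicit in the proof of Theorem~\ref{T:planarbip3m-m-iff}, namely the generalized theta graph obtained by pasting copies of $P_4$; but the bare clique $K_{t-1}$ gives a uniform and shorter argument across all three values of $t$.)
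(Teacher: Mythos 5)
Your proposal is correct and matches the paper's own proof in both directions: the paper also handles (i) $\Longrightarrow$ (ii) with the single example $K_{t-1}$, and (ii) $\Longrightarrow$ (i) via the fact that $K_t$-minor-free graphs ($2\le t\le 4$) always contain a vertex of degree at most $t-2$ (citing Dirac/Duffin for $t=4$), followed by an inductive greedy argument (which the paper outsources to Tuza \& Voigt rather than writing out, as you do). Your explicit greedy count $a-(t-2)b\ge b$ is exactly the inductive step the paper's citation encapsulates, so there is no substantive difference.
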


\begin{proof}
(i) $\Longrightarrow$ (ii).
Graph
$K_{t-1}$ is a $K_t$-minor-free graph
that is not $(a:b)$-choosable
for any $a$, $b$ with $\frac{a}{b} < t-1$.

\medskip

\noindent
(ii) $\Longrightarrow$ (i).
For $2\le t\le 4$,
every $K_t$-minor-free graph with order at least $1$
has a vertex of degree at most $t-2$.
In particular,
a $K_2$-minor-free graph is an edgeless graph,
which of course has a vertex of degree at most $0$.
A $K_3$-minor-free graph is a forest,
which must have a vertex of degree at most $1$.
And it follows
from a proof of Dirac~\cite[p.~87]{Dir1952}
(see also Duffin~\cite[Thm.~1, Corollary~4]{Duf1965})
that a $K_4$-minor-free graph
must have a vertex of degree at most $2$.

By a simple inductive argument, then,
for $2\le t\le 4$,
every $K_t$-minor-free graph is $(a:b)$-choosable
for all $a$, $b$ with $\frac{a}{b}\ge t-1$
(see Tuza \& Voigt~\cite[Thm.~2.1]{TuVo1996}).\ggcendpf
\end{proof}

Theorem~\ref{T:k234mf-iff} is not really new;
it simply restates well known ideas.
But it is a result in our desired form
(see Section~\ref{S:intro}),
and considering it together with Theorem~\ref{T:k5mf-5m-m-iff}
is suggestive.
We ask whether results of this kind hold
for $K_t$-minor-free graphs
for larger values of $t$.

\begin{question}
  \label{Q:kALLmf-iff}
Is it true that
for each integer $t\ge 2$,
there exists a real number $r_t$
such that the following are equivalent
for positive integers $a$ and $b$?
\begin{enumerate}
\item Every $K_t$-minor-free graph
is $(a:b)$-choosable.
\item $\displaystyle\frac{a}{b}\ge r_t$.\ggcnopf
\end{enumerate}
\end{question}

Question~\ref{Q:kALLmf-iff}
has an affirmative answer
for $2\le t\le 5$,
by Theorems~\ref{T:k5mf-5m-m-iff}
and
\ref{T:k234mf-iff}.
It remains open for larger values of $t$.


\end{document}